\newtheorem{thm}{Theorem}
\newtheorem{theorem}[thm]{Theorem}
\newtheorem{lemma}[thm]{Lemma}
\newtheorem{prop}{Proposition}
\newtheorem{rem}{Remark}
\newtheorem{proposition}[prop]{Proposition}
\newtheorem{cor}[thm]{Corollary}
\DeclareMathOperator{\lcm}{lcm}
\def\li{\mathrm{li}\,}
\def\mand{\qquad \mbox{and} \qquad}
\def\\{\cr}
\def\({\left(}
\def\){\right)}
\def\[{\left[}
\def\]{\right]}
\def\<{\langle}
\def\>{\rangle}
\def\fl#1{\left\lfloor#1\right\rfloor}
\def\cA{\mathcal A}
\def\cC{\mathcal C}
\def\cI{\mathcal I}
\def\cQ{\mathcal Q}
\def\cR{\mathcal R}
\def\cS{\mathcal S}
\def\cU{\mathcal U}
\def\eps{\varepsilon}
\def\E{\mathbf{E}}
\def\F{\mathbb{F}}
\def\Z{\mathbb{Z}}
\def\K{{K}}
\def\L{{L}}
\def\Q{\mathbb{Q}}
\def\cUp{\cU_p}
\def\cUpk{\cU_{p,k}}
\def\cRpk{\cR_{p,k}}
\begin{document}

\title{Rank statistics for a family of elliptic curves over
a function field}

\author{
{\sc Carl Pomerance}\\
{Department of Mathematics, Dartmouth College}\\
{Hanover, NH 03755-3551, USA} \\
{\tt carl.pomerance@dartmouth.edu} \\
\and
{\sc Igor E. Shparlinski}\\
             {Department of Computing, Macquarie University}\\
{ Sydney, NSW 2109,
Australia}\\
{\tt igor@ics.mq.edu.au}
}

\pagenumbering{arabic}

\date{ }
\maketitle

\bigskip
\bigskip
\hfill{\em{Dedicated to John Tate}}
\bigskip
\bigskip

\begin{abstract} We show that the average
and typical ranks in a
certain parametric family of elliptic curves
described by D.~Ulmer tend to infinity  as
the parameter $d \to\infty$. This is perhaps
unexpected since by a result of A.~Brumer,
the average rank for all elliptic curves over
a function field of positive characteristic is
asymptotically bounded above by 2.3.
\end{abstract}

{\renewcommand{\thefootnote}{}
\vskip -25pt
\footnote{
\noindent\textit{
2000 Mathematics Subject Classification:} \,
11N25, 11R37, 14H52
\vskip 0pt
\noindent The first author was supported in part by NSF grant DMS-0401422.
The second author was supported in part by ARC grant DP0556431.}
}

\newpage

\section{Introduction}

\subsection{Background}

Let $\F_q$ be the finite field of $q$ elements
of prime characteristic $p$.
We consider the parametric family of curves
$$
\E_d: \quad y^2 + xy = x^3 - t^d
$$
over the function field $\F_q(t)$, where $d$ is a positive integer.
Among other results, Ulmer~\cite[Proposition~6.4]{Ulm}
has shown that the conjecture of Birch and Swinnerton-Dyer holds for
each $\E_d$ when $d$ is not divisible $p$.

Denote by  $\cUp$  the set of positive integers which divide
some member of the sequence $p^n +1$, for $n=1,2,\dots$.
Let $\varphi$ denote Euler's function, and for $a,b$
coprime integers with $b>0$, let $\ell_a(b)$ be
the multiplicative order of the residue class $a$ in
the group $(\Z/b\Z)^\times$.   We always have
$\ell_a(b)\mid \varphi(b)$.
Ulmer~\cite[Theorem~9.2]{Ulm} has also shown that for every $d \in \cUp$,
the rank $R_q(d)$ of $\E_d$ over $\F_q(t)$ is given by
\begin{equation}
\label{eq:R and I}
R_q(d)~ =~ I_q(d)-C_q(d),
\end{equation}
where
$$
I_q(d) ~= ~\sum_{e|d} \frac{\varphi(e)}{\ell_q(e)}
$$
and $C_q(d)$ is an explicit correction term that
always satisfies $0\le C_q(d) \le 4$.
(Note that $d\in\cUp$ implies that $\gcd(e,q)=1$ for each
$e\mid d$, so that $I_q(d)$ is defined.)
Since members of $\cU_p$ are coprime to $p$,
the Birch and Swinnerton-Dyer conjecture holds for $\E_d$ for $d\in\cU_p$,
so that~\eqref{eq:R and I} holds as well for the analytic rank.

Ulmer~\cite{Ulm} considers the specific case $d=p^n+1$ and
$q=p$.  Then $\ell_p(d)=2n$, and each $\ell_p(e)\mid 2n$, so that
$$
I_p(p^n+1)~\ge~\sum_{e\mid p^n+1}\frac{\varphi(e)}{2n}~= ~\frac{p^n+1}{2n}.
$$
Thus,
$$
R_p(d)~\ge~ \frac{d \log p}{2\log d}-4,
$$
which compares very nicely with the upper bound
$$
R_p(d) ~\le ~\frac{d \log p}{2\log d} +O\(\frac{d (\log p)^2}{(\log d)^2}\)
$$
(uniformly over $d$ and $p$) due to Brumer~\cite{Bru}.

It is interesting that the expression $I_q(d)$ occurs in other contexts.
For example, Moree and Sol\'e~\cite{Mor} show that $I_q(d)$ is the
number of irreducible
factors of $t^d-1$ in $\F_q[t]$ and go on to apply $I_q(d)$ to a
combinatorial problem.

\subsection{Our results}

Using~\eqref{eq:R and I}, we show that on average over all numbers $d$
(without the restriction that $d \in \cUp$), the rank of $\E_d$
is quite large.  We do not know how to bound the
rank from above for integers
$d\not\in\cU_p$, but we can show that the average over $\cU_p$ is
not quite as big as Brumer's upper bound.

\begin{theorem}
\label{thm:Average}
There exists an absolute constant $\alpha > 1/2$ such that
for all finite fields $\F_q$ and all sufficiently large
large values of $x$ (depending only on the characteristic $p$ of $\F_q$),
\begin{equation}
\label{eq:alpha}
\frac{1}{x} \sum_{d\le x} R_q(d) ~ \ge~ x^\alpha.
\end{equation}
Moreover, for $x$ sufficiently large depending on $q$,
\begin{equation}
\label{eq:upper}
\Bigg(\sum_{\substack{d\le x\\ d\in\cU_p}}1\Bigg)^{-1}
\sum_{\substack{d\le x\\ d\in\cU_p}}R_q(d)~\le~
x^{1-\log\log\log x/(2\log\log x)}.
\end{equation}
\end{theorem}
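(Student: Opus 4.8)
The plan is to exploit the rank formula~\eqref{eq:R and I} on $\cUp$ in both directions, reducing everything to the arithmetic sum $I_q(d)$ and to the multiplicative order $\ell_q$. For the lower bound~\eqref{eq:alpha}, since ranks are non-negative and $0\le C_q(d)\le 4$, I would first discard all $d\notin\cUp$ and write
\[
\sum_{d\le x}R_q(d)~\ge~\sum_{\substack{d\le x\\ d\in\cUp}}\bigl(I_q(d)-4\bigr)~\ge~\sum_{\substack{d\le x\\ d\in\cUp}}I_q(d)~-~4\,\card{\{d\le x:d\in\cUp\}},
\]
so it suffices to bound $\sum_{d\le x,\,d\in\cUp}I_q(d)$ from below by $x^{1+\alpha}$ for some $\alpha>1/2$. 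Retaining only the top divisor $e=d$ gives the clean pointwise bound $I_q(d)\ge\varphi(d)/\ell_q(d)\ge d/\ell_q(d)$, and since $q$ is a power of $p$ we have $\ell_q(d)\le\ell_p(d)$; moreover every $d\in\cUp$ divides some $p^n+1$, so all of its divisors have order at most $\ell_p(d)=2n_d$, where $n_d$ is minimal with $d\mid p^{n_d}+1$, whence $I_q(d)\ge d/(2n_d)$. The core task is thus to show that integers $d\in\cUp$ whose order $\ell_q(d)$ is small relative to $d$ are abundant. I would produce such $d$ as divisors of $p^n+1$: for each $n$ with $p^n+1\le x$, every divisor of $p^n+1$ lies in $\cUp$ and contributes at least $d/(2n)$, and I would sum this across a suitable range of $n$, combining it with a sieve / Landau–Selberg–Delange lower bound for the number of admissible $d$ (integers built from primes $\ell$ for which $v_2(\ell_p(\ell))$ takes a common value, so that products remain in $\cUp$).

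For the upper bound~\eqref{eq:upper}, I would use $R_q(d)\le I_q(d)$, expand the divisor sum, and interchange the order of summation:
\[
\sum_{\substack{d\le x\\ d\in\cUp}}I_q(d)~=~\sum_{e\in\cUp}\frac{\varphi(e)}{\ell_q(e)}\,\card{\{d\le x:d\in\cUp,\ e\mid d\}}~\le~x\sum_{\substack{e\le x\\ e\in\cUp}}\frac{1}{\ell_q(e)},
\]
bounding the inner count by $x/e$ and using $\varphi(e)/e\le 1$. Dividing by $\card{\{d\le x:d\in\cUp\}}$, the problem splits into two independent estimates: a lower bound for $\card{\{d\le x:d\in\cUp\}}$ and an upper bound for $\sum_{e\le x,\,e\in\cUp}1/\ell_q(e)$. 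For the latter I would split the range of $\ell_q(e)$ at a threshold $T$, bounding the small-order part by the number of $e\le x$ dividing some $q^m-1$ with $m\le T$ and the large-order part trivially by $T^{-1}\card{\{e\le x:e\in\cUp\}}$; optimising $T$ is what should produce the precise exponent $1-\log\log\log x/(2\log\log x)$.

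The main obstacle I anticipate is the lower bound. Because $\cUp$ is thin and a \emph{typical} $d\in\cUp$ carries a prime factor of large multiplicative order (so that $\ell_q(d)$ is close to $\varphi(d)$ and $I_q(d)$ is small), the sum $\sum_{d\in\cUp}I_q(d)$ is governed by the comparatively rare $d$ of small order. Quantifying how often small orders occur within $\cUp$—and doing so uniformly in $q$, as~\eqref{eq:alpha} demands (large $x$ depending only on $p$)—is the delicate analytic point, and the balance in that count is precisely what should be expected to force the exponent above $1/2$.
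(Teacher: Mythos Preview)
Your treatment of the upper bound~\eqref{eq:upper} is essentially the paper's argument: both reduce to $\sum_{e\le x}1/\ell_q(e)$ after interchanging the divisor sum, and the controlling input is a uniform bound of the shape
\[
\#\{e\le x:\ell_q(e)=n\}~\le~x^{1-(3+\log_3x)/(2\log_2x)}
\]
from~\cite{Pom}. Whether you sum this over all $n\le x$ directly (as the paper does) or split at a threshold $T$ makes no real difference; the exponent comes out the same. (One small slip: you wrote $\varphi(d)/\ell_q(d)\ge d/\ell_q(d)$, which is backwards, but this is harmless for the upper bound.)

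The lower bound, however, has a real gap. You correctly isolate the problem as producing many $d\in\cUp$ with $\ell_q(d)$ small, but neither of your two suggestions does this. Taking divisors of $p^n+1$ with $p^n+1\le x$ gives at best
\[
\sum_{p^n+1\le x}\ \sum_{d\mid p^n+1}\frac{\varphi(d)}{2n}
~=~\sum_{p^n+1\le x}\frac{p^n+1}{2n}
~\ll~\frac{x\log p}{\log x},
\]
which is $o(x)$, not $x^{1+\alpha}$. And a Landau--Selberg--Delange count of integers built from primes in a fixed $\cR_{p,k}$ tells you only that such $d$ are plentiful; it says nothing about $\ell_q(d)$, which for a typical such $d$ is close to $\varphi(d)$.

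What is missing is the mechanism that forces $\ell_q(d)$ to be tiny for a set of $d$ of size $x^{1/2+\delta+o(1)}$. The paper uses the classical Erd\H os construction: one takes primes $r\in\cR_p$ (imposed via a congruence $r\equiv u\pmod{4p}$) with the additional constraint that $r-1$ is $y$-smooth for $y=\log x/\log_2x$. Bombieri--Vinogradov supplies $\gg z/\log z$ such primes in $[z/2,z]$ with $z=y^{2/(1-2\delta)}$, and then every product $d$ of $m=\lfloor\log x/\log z\rfloor$ of them satisfies $\lambda(d)\mid M_y$, hence $\ell_q(d)\le e^{(1+o(1))y}=x^{o(1)}$, while $d=x^{1+o(1)}$. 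The number of such products is $\binom{\#\cQ}{m}\ge x^{1/2+\delta+o(1)}$, and each contributes $R_q(d)\ge\varphi(d)/\ell_q(d)-4=x^{1+o(1)}$, giving $\sum_{d\le x}R_q(d)\ge x^{3/2+\delta+o(1)}$. The smoothness of $r-1$ is the whole point; without it there is no reason for $\lambda(d)$ to be small, and your sketch never invokes it.
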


The constant $\alpha$ in~\eqref{eq:alpha}
can be explicitly evaluated.
Moreover, assuming the Elliott--Halberstam conjecture about the
distribution of primes in residue classes (described below), we can show that
$\alpha$ may be taken as any number smaller
than~1.  Probably the upper bound~\eqref{eq:upper} is close to the
truth, but we do conjecture that the ``2" in the denominator of
the exponent can be removed.

The average order is presumably skewed by a few numbers $d$ where
the rank is especially big, at least that is the way we prove the
lower bound in
Theorem~\ref{thm:Average}.  One might wonder about $R_q(d)$ for a
``typical" number $d$.
We show that for almost all numbers $d$, in the sense of asymptotic density,
the rank is still fairly large.

\begin{theorem}
\label{thm:Normal}
Let $\F_q$ be a finite field of characteristic $p$
and let $\eps>0$ be arbitrary.   As $x\to\infty$,
except for $o_{p,\eps}(x)$ values of $d\le x$, we have
$$
R_q(d) ~\ge ~(\log d)^{(1/3-\eps)\log\log\log d}.
$$
\end{theorem}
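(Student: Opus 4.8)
The plan is to deduce the normal lower bound from Ulmer's exact formula \eqref{eq:R and I}, which is only available on $\cU_p$, by first passing from a general $d$ to a well-chosen divisor $e\mid d$ with $e\in\cU_p$. The bridge is a monotonicity of the rank under divisibility: for $e\mid d$ the substitution $t\mapsto t^{d/e}$ exhibits $\E_d$ over $\F_q(t)$ as the base change of $\E_e$ along the inclusion $\F_q(t^{d/e})\hookrightarrow\F_q(t)$, and since Mordell--Weil rank cannot drop under a field extension, $R_q(d)\ge R_q(e)$ for every $e\mid d$. Combining this with \eqref{eq:R and I}, the bound $0\le C_q\le 4$, and the trivial estimate $I_q(e)\ge\varphi(e)/\ell_q(e)$ from the single term $f=e$, I get
\[
R_q(d)\ \ge\ \max_{\substack{e\mid d\\ e\in\cU_p}}\ \frac{\varphi(e)}{\ell_q(e)}\ -\ 4 .
\]
Thus it suffices to exhibit, for all but $o_{p,\eps}(x)$ values of $d\le x$, a divisor $e\mid d$ with $e\in\cU_p$ and $\varphi(e)/\ell_q(e)\ge (\log d)^{(1/3-\eps)\log\log\log d}+4$.

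Next I would build $e$ multiplicatively from the prime factors of $d$. A prime $r\ne p$ lies in $\cU_p$ exactly when $\ell_p(r)$ is even, and a squarefree product of such primes is again in $\cU_p$ once the valuations $v_2(\ell_p(r))$ agree (the only obstruction to solving $p^n\equiv -1$ simultaneously is $2$-adic); the cleanest choice uses primes with $\ell_p(r)=2m_r$, $m_r$ odd. For $e=\prod_i r_i$ of this shape one has $\ell_q(e)\mid\lcm_i\ell_q(r_i)$, so $\varphi(e)/\ell_q(e)\ge \prod_i(r_i-1)/\lcm_i\ell_q(r_i)$, and if all $r_i$ divide a common $p^n+1$ then $\ell_q(e)\le 2n$ and this is at least $\varphi(e)/(2n)$. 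Taking logarithms recasts the target as a lower bound for the additive function $f(d)=\sum_{r\mid d,\ r\ \mathrm{good}}\log r$, net of the affordable loss $\log(2n)$. To make $f$ concentrate I would restrict the good primes to a window $r\in[w,w^{1+o(1)}]$, where $\log r$ is essentially constant, so that $f$ is comparable to $(\log w)\,\omega^{*}(d)$ with $\omega^{*}$ a genuine prime-counting additive function; Turán--Kubilius then bounds the variance by the mean, and Chebyshev's inequality gives $\omega^{*}(d)\ge\tfrac12 E$ for all but $o(x)$ values of $d$, where $E=\sum_{r\ \mathrm{good},\ r\in[w,w^{1+o(1)}]} r^{-1}$.

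The crux — and the step I expect to be genuinely hard — is producing a lower bound for $E$ with a modulus whose size $2n$, and hence the loss $\log(2n)$, is affordable. The obstruction is structural: the good primes are precisely those of small multiplicative order of $p$, they are sparse (only $O(n^2\log p)$ primes have order at most $2n$), and one cannot control the factorization of an individual $p^n+1$ — in the worst case its primitive prime factors are all enormous and contribute nothing to $E$. I would therefore not fix a single $p^n+1$, but either (i) take $n$ to be the $\lcm$ of the odd numbers up to a cutoff $K$, so that \emph{all} primes with $\ell_p(r)=2m$, $m$ odd $\le K$, are captured at once, and bound the small-prime content on average over such moduli; or (ii) detect the index $g_r=(r-1)/\ell_q(r)$ through power-residue conditions ``$t\mid g_r$'', whose densities are accessible via Chebotarev or the large sieve, and estimate $\sum_t(\log t)\sum_{t\mid g_r,\,r\le y}r^{-1}$. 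In both routes one is left with a two-parameter optimization — the cutoff $K$ (equivalently the window location $w$) controlling how many good primes a typical $d$ inherits, against the modulus size entering as $\log(2n)$ — and the solution of this optimization is what produces the shape $(\log d)^{(1/3-\eps)\log\log\log d}$, with $1/3$ the balanced value of the exponent. A final application of Chebyshev's inequality to the Turán--Kubilius variance bound shows the exceptional set is $o_{p,\eps}(x)$, completing the argument.
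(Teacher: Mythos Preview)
Your framework is right in broad outline---the monotonicity $R_q(d)\ge R_q(e)$ for $e\mid d$, passage to a divisor in $\cU_p$, the reduction to $\varphi(e)/\ell_q(e)$, and the eventual appeal to Tur\'an--Kubilius all appear in the paper's argument. But the central mechanism you propose is off, and as written the plan does not reach the target.

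The bound $\ell_q(e)\le 2n$ for $e\mid p^n+1$ is far too weak. If you insist that all the good primes $r\mid d$ divide a \emph{fixed} $p^n+1$ (your route~(i)), then either $n$ is small and there are only $O(n^2)$ such primes---so a typical $d$ has essentially none of them---or $n$ is large enough that $\sum_{r\mid p^n+1}1/r$ grows, but then $\log(2n)$ swallows the gain. Your windowing device compounds the problem: restricting good primes to $[w,w^{1+o(1)}]$ gives $E=\sum 1/r=O(1)$, so Tur\'an--Kubilius yields no growing lower bound for $\omega^*(d)$. Route~(ii), which focuses on the index $(r-1)/\ell_q(r)$, is aiming at the wrong structure: for most primes this index is bounded, and it is not what drives the result.

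What the paper does instead is take $e=d_p$ to be the product of \emph{all} prime powers in $d$ supported on $\cR_p$ (the set of odd primes $r$ with $v_2(\ell_p(r))$ equal to $1$ for $p$ odd, or $2$ for $p=2$), and then bound $\ell_q(d_p)\le\lambda(d_p)$. The problem becomes a normal-order estimate for $\varphi(d_p)/\lambda(d_p)$, which is exactly the Erd\H os--Pomerance--Schmutz machine: one writes $\log\varphi(d_p)-\log\lambda(d_p)=\sum_l(v_l(\varphi(d_p))-v_l(\lambda(d_p)))\log l$, shows the $\lambda$-part over small primes $l$ and the difference over large $l$ are both negligible (inherited from the EPS estimates via $d_p\mid d$), and applies Tur\'an--Kubilius to the additive function $h_p(d)=\sum_{l\le y\log y}v_l(\varphi(d_p))\log l$ with $y=\log_2x$. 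The mean $E(h_p,x)$ is computed from the density of $\{r\in\cR_p:r\equiv1\pmod l\}$, which Chebotarev (Proposition~\ref{prop:Qpkdistribution}) gives as $\tfrac{1}{3(l-1)}$ of all primes; summing $\tfrac{1}{3}\cdot\tfrac{\log l}{l-1}\cdot y$ over $l\le y\log y$ yields $\tfrac13 y\log y$.

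In particular, the constant $1/3$ is \emph{not} the output of a two-parameter optimization: it is the natural density of $\cR_p$ inside the primes (equation~\eqref{eq:Rpdistribution}). The exponent $1$ in the EPS normal order $\varphi(n)/\lambda(n)=(\log n)^{(1+o(1))\log_3 n}$ becomes $1/3$ precisely because only a third of the prime factors of a typical $d$ survive into $d_p$.
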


It has been shown by Brumer~\cite{Bru} that the average analytic rank
over all elliptic curves over a function field of positive
characteristic is bounded above by 2.3 asymptotically.  Since
by a result of Tate~\cite{Tate}
the algebraic rank is bounded by the analytic rank, the
same bound holds as well for the algebraic rank.  Thus,
Theorems~\ref{thm:Average} and~\ref{thm:Normal} show
that the thin family consisting of the curves $\E_d$
is indeed very special.

Concerning the set $\cU_p$ for which the rank formula~\eqref{eq:R and I}
holds, we show that the number of elements in $\cU_p$ up to $x$
is asymptotic to $c_px/(\log x)^{2/3}$ as $x\to\infty$, where $c_p$
is a positive constant,
see Corollary~\ref{cor:Dpdistribution} below.
(A more precise formula
may be found in Moree~\cite[Theorem 5]{Mo}.)

We remark that it seems very plausible that using the methods
of~\cite{EPS} and~\cite{LiPom} one can show
that under the assumption of the
Generalized Riemann Hypothesis for Kummerian fields
over $\Q$, we have
$$
R_q(d)=(\log d)^{(1+o(1))\log\log\log d}
$$
for almost all numbers $d\in\cU_p$ in the sense of asymptotic
density.   We hope to take this
up in a future paper.

Perhaps more importantly, it should be interesting to investigate
the situation for more families of elliptic curves than the one
family of Ulmer that we consider here.  For example, in
Darmon~\cite{Dar} many other families are considered each
of a similar flavor to Ulmer's.  One might
not know the Birch and Swinnerton-Dyer
conjecture in these cases, but at least some statistical
information might be gleaned for the analytic ranks.

\medskip
\noindent{\bf Acknowledgment.}
We wish to thank Douglas Ulmer for some helpful comments and his
encouragement.  We also thank an anonymous referee for a careful reading.

\section{Preparations}

\subsection{Notation}

We always use the letters $l$, $p$, $r$, $s$, and $t$ to denote prime numbers,
while $d$, $e$, $k$, $m$, and $n$ always denote positive integers.
We let $P(n)$ denote the largest prime factor of $n$ if $n>1$, and $P(1)=1$.

As usual,  we use $\pi(x;k,a)$ to denote the number of primes $r\le x$ with
$r\equiv a \pmod k$, and we let $\pi(x)$ denote the total number of
all primes $r\le x$.

Given a set $\cA$ of positive integers, we use $\cA(x)$ to
denote the subset of $a \in \cA$ with $a \le x$.

For any real number $x>0$
and any integer $\nu\ge 1$, we write $\log_\nu x$
for the function defined inductively by
$\log_1 x=\max\{\log x,\,1\}$ (where $\log x$ is the natural
logarithm of $x$) and $\log_\nu x=\log_1(\log_{\nu-1} x)$
for $\nu>1$.

We use the order symbols $O$, $o$, $\ll$, $\gg$ with their
usual meanings in analytic number theory, where  all implied
constants are {\it absolute\/}, unless
indicated by subscripts.  (We recall
that the notations $A\ll B$, $B\gg A$ and $A=O(B)$ are equivalent.)

We use $v_l(n)$ to denote the (exponential) $l$-adic valuation of $n$; that is,
$v_l(n)$ is the exponent on the prime $l$ in the prime
factorization of $n$.

\subsection{Structure of $\cUp$}
\label{sec:Struct D}

Recall that $\cUp$ is the set of natural numbers that divide $p^n+1$
for some positive integer $n$.

\begin{lemma}
\label{lem:structure}
Let $p$ be a prime number and suppose $d\in\cUp$.
\begin{itemize}
\item[(i)]
There is a positive integer $k$ such that
$v_2(\ell_p(r))=k$ for each odd prime factor $r$ of $d$.
\item[(ii)]
If $p>2$ and $k=1$, then $v_2(d)\le v_2(p+1)$, while if
$p>2$ and $k>1$, then $v_2(d)\le 1$.
\end{itemize}
\end{lemma}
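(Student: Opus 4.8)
The plan is to reduce both parts to a single observation about the $2$-adic valuation of multiplicative orders. Since $d\in\cUp$, fix once and for all an integer $n\ge 1$ with $d\mid p^n+1$; the same $n$ will serve for every divisor of $d$, and this is what ultimately makes $k$ well-defined. First I would record the elementary fact that for \emph{every} divisor $e$ of $d$ with $e>2$ one has $v_2(\ell_p(e))=v_2(n)+1$. Indeed, $e\mid p^n+1$ gives $p^n\equiv -1\pmod e$ (and $\gcd(p,e)=1$, so $\ell_p(e)$ is defined), whence $\ell_p(e)\mid 2n$ but $\ell_p(e)\nmid n$, the latter because $-1\not\equiv 1\pmod e$ as $e>2$. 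The odd part of $\ell_p(e)$ necessarily divides the odd part of $2n$, hence divides $n$, so the only way to have $\ell_p(e)\nmid n$ is $v_2(\ell_p(e))>v_2(n)$; combined with $\ell_p(e)\mid 2n$, which forces $v_2(\ell_p(e))\le v_2(n)+1$, this pins down $v_2(\ell_p(e))=v_2(n)+1$.

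Part (i) is then immediate: each odd prime factor $r$ of $d$ is a divisor exceeding $2$, so $v_2(\ell_p(r))=v_2(n)+1$ for all of them, and one takes $k=v_2(n)+1$.

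For part (ii) I would read the parity of $n$ off from $k$ and compute $v_2(p^n+1)$ directly, using $v_2(d)\le v_2(p^n+1)$. If $k=1$ then $v_2(n)=0$, i.e.\ $n$ is odd; factoring $p^n+1=(p+1)\sum_{i=0}^{n-1}(-1)^i p^{\,n-1-i}$ and noting the cofactor is a sum of $n$ odd terms, hence odd, yields $v_2(p^n+1)=v_2(p+1)$, so $v_2(d)\le v_2(p+1)$. If $k>1$ then $n$ is even, and since $p$ is odd, $p^n=(p^2)^{n/2}\equiv 1\pmod 4$, so $p^n+1\equiv 2\pmod 4$, giving $v_2(p^n+1)=1$ and thus $v_2(d)\le 1$.

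The only point needing care—and the step I expect to be the main, if modest, obstacle—is the bookkeeping that a single witness $n$ applies simultaneously to all prime factors of $d$, which is precisely what makes $k$ well-defined in (i) and ties the parity of $n$ to the value of $k$ in (ii). Everything else is routine $2$-adic computation.
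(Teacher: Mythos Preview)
Your proof is correct and follows essentially the same route as the paper: fix a single witness $n$ with $d\mid p^n+1$, deduce $\ell_p(e)\mid 2n$ but $\ell_p(e)\nmid n$ to get $v_2(\ell_p(e))=v_2(n)+1$, and then read off the parity of $n$ from $k$ to bound $v_2(p^n+1)$. The only differences are cosmetic: you state the key observation for all divisors $e>2$ rather than just for odd primes, and you spell out in more detail why $\ell_p(e)\mid 2n$, $\ell_p(e)\nmid n$ forces the $2$-adic valuation to equal $v_2(n)+1$ and why $v_2(p^n+1)=v_2(p+1)$ for odd $n$.
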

\begin{proof}
Suppose $d\in\cUp$ and $r$ is an odd prime factor of $d$.
Since $d\mid p^n+1$ for some positive integer $n$,
we have $r\mid p^n+1$ and $r\nmid p^n-1$.
Thus, $\ell_p(r)\mid 2n$ and
$\ell_p(r)\nmid n$, so $v_2(\ell_p(r))=v_2(2n)=v_2(n)+1$.
Thus, (i) follows with $k=v_2(n)+1$.
For (ii) note that from our proof of (i), $k=1$ if and only if
$n$ is odd.  But for odd $n$ we have $v_2(p^n+1)=v_2(p+1)$, so
$v_2(d)\le v_2(p+1)$.  And if $k>1$, we have $n$ even, so
$p^n+1\equiv 2\pmod 4$ and $v_2(d)\le 1$.
\end{proof}

For $p$ prime and $k$ a positive integer let
$\cUpk$ denote the set of integers $d$ coprime to $p$
such that for each odd prime $r\mid d$ we have $v_2(\ell_p(r))=k$;
further, if $p>2$, $k=1$, then $v_2(d)\le v_2(p+1)$, and if $p>2$, $k>1$,
then $v_2(d)\le 1$.
Thus, Lemma~\ref{lem:structure} implies that
$\cUp\subset\bigcup_{k\ge1}\cUpk$.  In fact, they are equal.

\begin{lemma}
\label{lem:structure2}
For each prime $p$, we have $\cUp=\bigcup_{k\ge1}\cUpk$.
\end{lemma}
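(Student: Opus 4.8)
The plan is to prove the reverse inclusion, since Lemma~\ref{lem:structure} already gives $\cUp\subset\bigcup_{k\ge1}\cUpk$. Thus I would fix $k\ge1$ and a member $d\in\cUpk$, and exhibit a positive integer $n$ with $d\mid p^n+1$, i.e. $p^n\equiv-1\pmod d$. Writing $d=2^{a_0}\prod_{i=1}^s r_i^{a_i}$ with the $r_i$ distinct odd primes, by the Chinese Remainder Theorem (CRT) this amounts to solving $p^n\equiv-1$ modulo each prime power simultaneously, with a single $n$.

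First I would dispose of the odd prime powers. For an odd prime $r\mid d$ the group $(\Z/r^{a}\Z)^\times$ is cyclic of even order, so it contains $-1$ as its unique element of order $2$. If $\ell:=\ell_p(r^{a})$ is even, then the cyclic subgroup $\langle p\rangle$ likewise has a unique element of order $2$, namely $p^{\ell/2}$, which must therefore coincide with $-1$; hence $p^n\equiv-1\pmod{r^{a}}$ holds exactly when $n\equiv\ell/2\pmod{\ell}$. The point now is that $v_2(\ell_p(r^{a}))=v_2(\ell_p(r))=k\ge1$, since the order modulo $r^a$ differs from the order modulo $r$ only by an odd power of $r$; so I may write $\ell=2^k u$ with $u$ odd and split the congruence $n\equiv 2^{k-1}u\pmod{2^k u}$, using coprimality of $2^k$ and $u$, into the two independent congruences $n\equiv 2^{k-1}\pmod{2^k}$ and $n\equiv 0\pmod{u}$. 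The $2$-adic congruence here depends only on $k$ and so is identical for every odd prime factor of $d$ — this is exactly where the common value $k$ in the definition of $\cUpk$ is used — while the odd congruences combine harmlessly into $n\equiv0\pmod{U}$, with $U$ the odd least common multiple of the various $u$'s. These are compatible, so the CRT produces infinitely many positive $n$ meeting all the odd-prime-power requirements.

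It then remains to satisfy $p^n\equiv-1\pmod{2^{a_0}}$ for such an $n$, and this is where I expect the only genuine case-work, governed by the valuation constraints built into $\cUpk$. If $p=2$ then $d$ is odd, $a_0=0$, and there is nothing to check. If $p>2$ and $k>1$, the definition forces $a_0\le1$, and $p^n\equiv1\equiv-1\pmod2$ holds automatically. If $p>2$ and $k=1$, the definition forces $2^{a_0}\mid p+1$, so $p\equiv-1\pmod{2^{a_0}}$; and since $k=1$ makes the $2$-adic congruence above read $n\equiv1\pmod2$, the exponent $n$ is odd, whence $p^n\equiv(-1)^n=-1\pmod{2^{a_0}}$. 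In every case the chosen $n$ works, giving $d\mid p^n+1$ and hence $d\in\cUp$.

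The main obstacle is not any single estimate but the bookkeeping that makes the separate congruences mutually consistent: the content of the lemma is precisely that the definition of $\cUpk$ — the uniform exponent $k$ together with the restriction on $v_2(d)$ — encodes exactly the compatibility needed for a simultaneous solution of $p^n\equiv-1$ across all prime-power divisors of $d$. Verifying that the decomposition's $2$-adic part is forced to be the common value $n\equiv2^{k-1}\pmod{2^k}$, independent of the prime, is the crux; once that is in hand the remaining pieces assemble routinely through the CRT.
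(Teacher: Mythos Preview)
Your proof is correct and follows essentially the same line as the paper's: both hinge on the observation that $v_2(\ell_p(r^a))=v_2(\ell_p(r))=k$ for every odd prime power $r^a\mid d$, which makes the $2$-adic constraint $n\equiv 2^{k-1}\pmod{2^k}$ uniform across all odd prime factors, and both handle the even part of $d$ by the same case split on $k=1$ versus $k>1$. The only difference is cosmetic: where you assemble $n$ abstractly via the CRT, the paper writes down the explicit witness $n=\ell_p(d_o)/2$ (with $d_o$ the odd part of $d$), which is precisely one of the solutions your CRT system produces.
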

\begin{proof}
Suppose $d\in\cUpk$.  We may assume $d>2$.  If $d$ is a power of 2, then $k=1$,
$p>2$, and $d\mid p+1$, so that $d\in\cUp$.  If $d$ is not a power of 2, let
$d_o$ be the odd part of $d$ and let
$m=\ell_p(d_o)$.  Then $m$ is the least common multiple  of the numbers
$\ell_p(r^a)$
where $r^a$ runs over the odd prime power divisors of $d$.  We
have $\ell_p(r^a)/\ell_p(r)\mid r^{a-1}$, so that if $r$ is
odd, we have $v_2(\ell_p(r^a))=v_2(\ell_p(r))=k$.  Thus, $v_2(m)=k$ and
we have $r\nmid p^{m/2}-1$.  But $r^a\mid p^m-1$, so we have
$r^a\mid p^{m/2}+1$.  Thus, the odd part of $d$ divides $p^{m/2}+1$.
If $k>1$ and $p>2$, then $v_2(d)\le1$, so that the even part of $d$
also divides $p^{m/2}+1$.  Further, if $k=1$ and $p>2$, then
$v_2(d)\le v_2(p+1)$.
In this case, $m/2$ is odd, so that $p+1\mid p^{m/2}+1$,
and so the even part of $d$ again divides $p^{m/2}+1$.  We thus have
that $d\mid p^{m/2}+1$, and this concludes the proof.
\end{proof}

Let $\cRpk$ denote the set of odd prime members of $\cUpk$.  That is,
$$
\cRpk~=~\{r~\hbox{an odd prime}~: r\ne p,~v_2(\ell_p(r))=k\}.
$$
Then,
$\cUpk$ is the set of integers $d$ all of whose odd prime factors come from
$\cRpk$, with $v_2(d)$ bounded as discussed above. After a classical
result of Wirsing~\cite{Wirs}, the distribution of the sets $\cUpk$
within the natural numbers follows from the distribution of the sets
$\cRpk$ within the prime numbers in a way that is made more precise below.

The following result should be compared with results in~\cite{Mo}
and with~\cite[Theorem~1.3]{Pappa}.
We discuss the proof in Section~\ref{sec:Prop1 proof}.

\begin{proposition}
\label{prop:Rpkdistribution}
Let $x$ be large and let $p\le(\log x)^{2/3}$ be a prime number.
Let
$$E(x)=\frac{x\log_2x}{(\log x)^{7/6}}
$$
For $p >2$, we have
\begin{eqnarray*}
\#\cR_{p,1}(x)~=~\frac13\pi(x)+O(E(x)),&&
\#\cR_{p,2}(x)~=~\frac16\pi(x)+O(E(x)),\\
\sum_{k\ge3}\#\cRpk(x)&=&\frac16\pi(x)+O(E(x)).
\end{eqnarray*}
Further,
\begin{eqnarray*}
\#\cR_{2,1}(x)~=~\frac7{24}\pi(x)+O(E(x)),
&&
\#\cR_{2,2}(x)~=~\frac13\pi(x)+O(E(x)),\\
\sum_{k\ge3}\#\cR_{2,k}(x)&=&\frac1{12}\pi(x)+O(E(x)).
\end{eqnarray*}
\end{proposition}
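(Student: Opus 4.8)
The plan is to translate the condition $v_2(\ell_p(r))=k$ into a statement about how the prime $r$ splits in a tower of cyclotomic--Kummer fields, and then to count such primes. The arithmetic input is the following reformulation: writing $v=v_2(r-1)$, the $2$-Sylow part of $(\Z/r\Z)^\times$ is cyclic of order $2^v$, and a short computation in this cyclic group shows that for $1\le k\le v$ one has $v_2(\ell_p(r))=k$ if and only if $p$ is a $2^{v-k}$-th power residue modulo $r$ but not a $2^{v-k+1}$-th power residue (and $v_2(\ell_p(r))\ge1$ forces $v\ge1$). Now for an odd prime $r\ne p$ and $a\ge b$, the prime $r$ splits completely in $K_{a,b}:=\Q(\zeta_{2^a},p^{1/2^b})$ exactly when $r\equiv1\pmod{2^a}$ and $p$ is a $2^b$-th power residue modulo $r$. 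Hence, setting $A(a,b):=\#\{r\le x:\ r\equiv1\ (\mathrm{mod}\ 2^a),\ p\text{ a }2^b\text{-th power residue mod }r\}$, so that $A(a,b)$ counts primes splitting completely in $K_{a,b}$, an inclusion--exclusion over the cyclotomic levels $2^v,2^{v+1}$ and the Kummer levels $2^{v-k},2^{v-k+1}$ gives
$$
\#\cRpk(x)=\sum_{v\ge k}\Big(\big[A(v,v-k)-A(v+1,v-k)\big]-\big[A(v,v-k+1)-A(v+1,v-k+1)\big]\Big).
$$

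Next I would insert the Chebotarev main term $A(a,b)=\pi(x)/[K_{a,b}:\Q]+(\text{error})$. For an odd prime $p$ the quadratic field $\Q(\sqrt p)$ is not contained in any $2$-power cyclotomic field, so the Kummer extension does not degenerate and $[K_{a,b}:\Q]=2^{a-1}\cdot2^{b}=2^{a+b-1}$; substituting this and summing the resulting geometric progression yields exactly $\tfrac13\pi(x)$, $\tfrac16\pi(x)$, $\tfrac16\pi(x)$ for $k=1$, $k=2$ and $\sum_{k\ge3}$. For $p=2$ the single relation $\sqrt2\in\Q(\zeta_8)$ collapses the degree to $[K_{a,b}:\Q]=2^{a+b-2}$ once $a\ge3$ and $b\ge1$, while $K_{a,b}$ keeps the full degree for $a\le2$ and for $b=0$; redoing the same summation with these corrected degrees produces the shifted constants $\tfrac7{24}$, $\tfrac13$, $\tfrac1{12}$. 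Thus the main term is purely a matter of computing $2$-power Kummer degrees and summing a geometric series.

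The error term $E(x)$ should come entirely from truncating the sum over $v$. I would cut the sum at a level $V$ with $2^{V}\asymp(\log x)^{1/6}/\log_2 x$: the contribution of all $v>V$ is at most the number of primes $r\le x$ with $2^{V+1}\mid r-1$, which by the Brun--Titchmarsh inequality is $\ll x/(2^{V}\log x)\ll x\log_2 x/(\log x)^{7/6}=E(x)$, and this is precisely where the factor $\log_2 x$ enters. For the $O((\log_2 x)^2)$ surviving pairs $(a,b)$ with $a\le V+1$, the fields $K_{a,b}$ have degree $[K_{a,b}:\Q]\le2^{2V}\ll(\log x)^{1/3}$ and, since $p\le(\log x)^{2/3}$, discriminant obeying $\log\mathrm{disc}(K_{a,b})\ll(\log x)^{1/3}\log_2 x$, so that $(\log\mathrm{disc})^2\ll\log x$; an effective form of the Chebotarev density theorem applied to the split-completely (identity) class then gives, for each such field, an error saving more than any power of $\log x$, which is comfortably absorbed into $E(x)$.

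The main obstacle is making this last step genuinely uniform and unconditional, because effective Chebotarev for the $K_{a,b}$ is threatened by a possible Landau--Siegel exceptional zero, whose contribution is of size $x^{\beta}$ with $\beta$ near $1$ and is not controlled by the degree and discriminant bounds alone. Since $K_{a,b}$ is Galois over $\Q$, such a zero must belong to the $L$-function of a one-dimensional (and in fact real quadratic) character appearing in the Artin factorization of $\zeta_{K_{a,b}}$, and any such character has conductor dividing $2^{a+1}p\ll(\log x)^{O(1)}$. The delicate point is therefore to rule out or neutralize this exceptional zero for quadratic characters of poly-logarithmic conductor uniformly in $p\le(\log x)^{2/3}$ — via the Siegel--Walfisz estimate for the abelian part together with the Deuring--Heilbronn repulsion phenomenon — so that the effective Chebotarev error survives. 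A secondary, purely technical nuisance is the $p=2$ bookkeeping: the degree collapse at $a\ge3,\,b\ge1$ must be tracked carefully through the inclusion--exclusion, with the boundary cases $a\le2$ and $b=0$ treated separately, and it is exactly this tracking that converts the odd-$p$ constants $\tfrac13,\tfrac16,\tfrac16$ into $\tfrac7{24},\tfrac13,\tfrac1{12}$.
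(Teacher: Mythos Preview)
Your approach is essentially the paper's: the paper uses the same inclusion--exclusion over the cyclotomic--Kummer fields $\Q(\zeta_{2^a},p^{1/2^b})$ (their $\varpi_p(x;2^a,2^b)$ is exactly your $A(a,b)$), the same truncation of the sum over $v$ at $2^V\asymp(\log x)^{1/6}/\log_2x$ via Brun--Titchmarsh, and the same degree bookkeeping (including the collapse at $p=2$, $a\ge3$, $b\ge1$) to extract the constants.

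The one point where the paper is cleaner than your sketch is the exceptional zero, which you flag as ``the main obstacle''. The paper does not go through Siegel--Walfisz and Deuring--Heilbronn at all; it simply packages Lagarias--Odlyzko together with Stark's effective bound
\[
\beta \;<\; 1 - \frac{c}{\max\{|\Delta|^{1/[L:\Q]},\,\log|\Delta|\}}.
\]
With your own estimates $[K_{a,b}:\Q]\ll(\log x)^{1/3}$ and $\log|\Delta|\ll(\log x)^{1/3}\log_2x$, together with $|\Delta|^{1/[L:\Q]}\le 2^a\cdot p\ll(\log x)^{5/6}$, one gets $\beta<1-c(\log_2x)^2/\log x$, hence $x^{\beta}\ll x/(\log x)^{c\log_2x}$, which already saves more than any fixed power of $\log x$ uniformly in $p\le(\log x)^{2/3}$. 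So the obstacle you identified dissolves directly, without any repulsion argument.
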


For $p$ a prime, let
$$
\cR_p=\begin{cases}
\cR_{p,1}\,,&p>2\\
\cR_{2,2}\,,&p=2.
\end{cases}
$$
{}From Proposition~\ref{prop:Rpkdistribution} we have
\begin{equation}
\label{eq:Rpdistribution}
\#\cR_p(x)~=~\frac13\pi(x)+O\(\frac{x\log_2x}{(\log x)^{7/6}}\).
\end{equation}

We can now establish the following result about the distribution of the sets
$\cUp$.

\begin{cor}
\label{cor:Dpdistribution}
For each prime $p$,
there is a positive constant $c_p$ such that
$$
\#\cUp(x)~\sim~c_px/(\log x)^{2/3}
$$
as $x\to\infty$.
\end{cor}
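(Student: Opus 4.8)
The plan is to apply the classical theorem of Wirsing~\cite{Wirs} on the counting function of the multiplicative semigroup generated by a set of primes of prescribed density, using the decomposition $\cUp=\bigcup_{k\ge1}\cUpk$ from Lemma~\ref{lem:structure2} together with the prime densities supplied by Proposition~\ref{prop:Rpkdistribution}. First I would record that the sets $\cUpk$ are essentially disjoint: since $v_2(\ell_p(r))$ is a single well-defined value for each odd prime $r$, any integer $d$ with at least one odd prime factor lies in $\cUpk$ for exactly one $k$. The only integers lying in several $\cUpk$ are powers of $2$, of which there are $O(\log x)$ up to $x$, so that
\begin{equation*}
\#\cUp(x)=\sum_{k\ge1}\#\cUpk(x)+O(\log x).
\end{equation*}

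The form of Wirsing's theorem I would use is: if $\cP$ is a set of primes with $\#\cP(x)\sim\tau\,\pi(x)$ for some fixed $\tau\in(0,1)$, then the number of integers $n\le x$ all of whose prime factors lie in $\cP$ is asymptotic to $A\,x(\log x)^{\tau-1}$ for a positive constant $A=A(\cP)$. (Proposition~\ref{prop:Rpkdistribution} gives the relative densities with an error term that, via partial summation, comfortably yields the hypothesis $\sum_{p\le x,\,p\in\cP}\log p\sim\tau x$ needed here.) The dominant contribution comes from the single index $k_0$ maximizing the density of $\cRpk$, namely $k_0=1$ when $p>2$ and $k_0=2$ when $p=2$; in either case this density equals $\tfrac13$ by~\eqref{eq:Rpdistribution}. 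Applying the theorem to $\cR_p=\cR_{p,k_0}$ with $\tau=\tfrac13$ counts the odd members of $\cU_{p,k_0}$ as $\sim A\,x(\log x)^{-2/3}$. The constraint on $v_2(d)$ only sums a bounded number of dilates $d=2^a m$ with $m$ odd, changing the constant but not the order, so that $\#\cU_{p,k_0}(x)\sim c_p\,x/(\log x)^{2/3}$ with $c_p>0$.

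It remains to show that the remaining indices contribute $o(x/(\log x)^{2/3})$. For each fixed $k\ne k_0$ the set $\cRpk$ has density strictly below $\tfrac13$ (at most $\tfrac16$ for $p>2$, and $\tfrac7{24}$ or $\tfrac1{12}$ for $p=2$), so Wirsing's theorem gives $\#\cUpk(x)\ll x(\log x)^{\tau_k-1}$ with $\tau_k<\tfrac13$, hence $o(x/(\log x)^{2/3})$. To control the infinitely many terms with $k\ge3$ at once, I would bound $\bigcup_{k\ge3}\cUpk$ by the larger set of integers all of whose odd prime factors lie in $\bigcup_{k\ge3}\cRpk$; by Proposition~\ref{prop:Rpkdistribution} this union of primes has density $\tfrac16$, so the theorem bounds its count by $O(x/(\log x)^{5/6})=o(x/(\log x)^{2/3})$. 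Combining the dominant term with these negligible contributions yields $\#\cUp(x)\sim c_p\,x/(\log x)^{2/3}$.

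The main obstacle is purely that of invoking Wirsing's theorem in exactly the right form and verifying its hypotheses from Proposition~\ref{prop:Rpkdistribution}: one must confirm that the relative densities translate into the stated semigroup-counting asymptotic, and---more delicately---that the tail over $k\ge3$, a union of infinitely many semigroups, is handled uniformly rather than term by term. The enveloping-set bound described above is what makes this tail estimate clean.
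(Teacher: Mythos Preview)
Your proposal is correct and follows essentially the same route as the paper: decompose $\cUp$ via Lemma~\ref{lem:structure2}, apply Wirsing's theorem with the density $\tfrac13$ from Proposition~\ref{prop:Rpkdistribution} to obtain the main term from $\cU_{p,k_0}$ (with $k_0=1$ for $p>2$, $k_0=2$ for $p=2$), and bound the remaining $\cUpk$ by smaller powers of $\log x$, handling the tail $k\ge3$ via the enveloping set of integers supported on $\bigcup_{k\ge3}\cRpk$. The paper records exactly the same three upper bounds $x/(\log x)^{17/24}$, $x/(\log x)^{5/6}$, $x/(\log x)^{5/6}$ that your densities $\tfrac{7}{24}$, $\tfrac16$, $\tfrac16$ produce (note that for $p=2$ the $k\ge3$ density is actually $\tfrac1{12}$, but the weaker $\tfrac16$ bound suffices, as the paper also uses).
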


\begin{proof}
It follows directly from Proposition~\ref{prop:Rpkdistribution}
and
Wirsing's theorem~\cite{Wirs}
(see too~\cite[Chapter~II.7, Exercise~9]{Ten})
that there are positive constants $c_p$ such that
$$
\#\cU_{p,1}(x)~\sim~c_px/(\log x)^{2/3}~\hbox{ for }~p>2
~\hbox{ and }~ \#\cU_{2,2}(x)~\sim~c_2x/(\log x)^{2/3}
$$
as $x\to\infty$.
Using the same tools, we have
\begin{eqnarray*}
\#\cU_{2,1}(x)~\ll~x/(\log x)^{17/24},&&
\#\cU_{p,2}(x)~\ll~x/(\log x)^{5/6}~\hbox{ for }~p\ge3,\\
\#\(\bigcup_{k\ge3}\cU_{p,k}\)(x)&\ll&x/(\log x)^{5/6}~\hbox{ for all }~p.
\end{eqnarray*}
The result thus follows from Lemma~\ref{lem:structure2}.
\end{proof}

\begin{rem}
{\rm As mentioned in the introduction,
a more precise result, giving an asymptotic expansion
for $\#\cU_p(x)$ is presented by Moree~\cite[Theorem~5]{Mo}.}
\end{rem}

We need an estimate on the cardinality of
a somewhat more specialized set
which we  use in the sequel.
Suppose $m$ is an odd integer not divisible by
$p$.  Let
\begin{equation}
\label{eq:Qpmdef}
\cQ_{p,m}~=~\{r\in\cR_p:r\equiv 1\kern-5pt\pmod m\}.
\end{equation}

\begin{proposition}
\label{prop:Qpkdistribution}
Let $x$ be large. Assume that a prime $p$ and a positive odd
integer $m$ not divisible by $p$ satisfy the inequalities
$$
p \le   (\log x)^{2/3}
\quad \text{and}\quad m\le\frac{(\log x)^{1/6}}{\log_2x}.
$$
We have
$$
\#\cQ_{p,m}(x)~=~\frac1{3\varphi(m)}\pi(x) +O\(\frac{x\log_2x}{(\log
x)^{7/6}}\).
$$
\end{proposition}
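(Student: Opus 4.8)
The plan is to run the proof of Proposition~\ref{prop:Rpkdistribution} essentially verbatim, carrying the extra congruence $r\equiv1\pmod m$ through every step. Since $m$ is odd and coprime to $p$, this congruence is asymptotically independent of the $2$-adic order condition that defines $\cR_p$, and it will contribute exactly the factor $1/\varphi(m)$ appearing in the main term.

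First I would recall the arithmetic description of $\cR_p$ on which that proof rests. Writing $s=v_2(r-1)$ and letting $k$ be $1$ when $p>2$ and $2$ when $p=2$, one has $r\in\cR_p$ precisely when $s\ge k$ and $p$ is a $2^{s-k}$-th power residue but not a $2^{s-k+1}$-th power residue modulo $r$. Classifying primes by the value of $s$, detecting the condition $v_2(r-1)=s$ by inclusion--exclusion between $r\equiv1\pmod{2^s}$ and $r\equiv1\pmod{2^{s+1}}$, and detecting each power-residue condition by whether $r$ splits completely in a Kummer field $\Q(\zeta_{2^a},p^{1/2^b})$, one writes $\#\cR_p(x)$ as an alternating combination of counts of primes $r\le x$ splitting completely in fields of the shape $\Q(\zeta_{2^a},p^{1/2^b})$, summed over $s$. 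Effective prime counting in these fields, the factorization of their degrees, and a geometric summation over $s$ produce the density $\tfrac13$.

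The only modification for $\cQ_{p,m}$ is to intersect with the event $r\equiv1\pmod m$, i.e.\ to adjoin $\zeta_m$ to every field above, replacing each $\Q(\zeta_{2^a},p^{1/2^b})$ by $\Q(\zeta_{2^am},p^{1/2^b})$. Because $m$ is odd and $p\nmid m$, the field $\Q(\zeta_m)$ is ramified only at primes dividing $m$, whereas $\Q(\zeta_{2^a},p^{1/2^b})$ is ramified only at $2$ and $p$; hence these fields are linearly disjoint over $\Q$, the degree of each compositum is multiplied by exactly $\varphi(m)$, and every splitting density is multiplied by $1/\varphi(m)$. Running the same geometric summation then gives the main term $\tfrac1{3\varphi(m)}\pi(x)$. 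This is where robustness to the $p=2$ anomalies of Proposition~\ref{prop:Rpkdistribution} matters: since $m$ is coprime to $2p$, the relations among $\sqrt2$ and the fields $\Q(\zeta_{2^a})$ that distort the base density at $p=2$ are confined to the prime $2$, so the factor $1/\varphi(m)$ is clean for both parities of $p$.

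The main obstacle is the uniform error analysis. Adjoining $\zeta_m$ multiplies the degrees of the relevant fields by $\varphi(m)$ and correspondingly enlarges their discriminants, so one must re-examine the error terms of Proposition~\ref{prop:Rpkdistribution} with these larger fields. The hypothesis $m\le(\log x)^{1/6}/\log_2x$, together with $p\le(\log x)^{2/3}$, is precisely what keeps $\log(\mathrm{disc})$ small enough to stay within the Siegel--Walfisz / effective-Chebotarev range, so that the individual counting errors still sum to the bound $E(x)$ of Proposition~\ref{prop:Rpkdistribution}; the tail over large $s$ is controlled, independently of $m$, by bounding via Brun--Titchmarsh the primes $r\le x$ with $v_2(r-1)$ large. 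Verifying that these two contributions combine to the stated error $O\!\big(x\log_2x/(\log x)^{7/6}\big)$ is the technical heart of the argument.
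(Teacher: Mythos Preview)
Your plan is exactly the paper's own argument: the paper proves Proposition~\ref{prop:Qpkdistribution} by inserting a factor $m$ into the second argument of each $\varpi_p$ in the decomposition $N_{p,k}$, applying Lemma~\ref{lem:varpiest} when $m\,2^{k+1}\le(\log x)^{1/6}/\log_2x$, and Brun--Titchmarsh (or the trivial bound) otherwise. Your conceptual phrasing via adjoining $\zeta_m$ and linear disjointness is equivalent, and your identification of the density $1/(3\varphi(m))$ is correct.

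There is one point where your sketch is not quite right. You say the tail over large $s$ is controlled ``independently of $m$'' by bounding, via Brun--Titchmarsh, the primes with $v_2(r-1)$ large. This does not work as stated: the effective-Chebotarev range of Lemma~\ref{lem:varpiest} now requires $m\,2^{s}\le(\log x)^{1/6}/\log_2x$, so for $m$ near its upper limit the main range covers only $s=O(1)$, and the ``tail'' begins at small $s$. If you then bound the tail by $\pi(x;2^s,1)\ll\pi(x)/2^s$ and sum over $s$ with $m\,2^s>(\log x)^{1/6}/\log_2x$, you get a contribution of order $m\cdot x\log_2x/(\log x)^{7/6}$, which for large $m$ swamps the main term $\pi(x)/(3\varphi(m))$. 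The fix, which the paper makes explicit, is to retain the congruence $r\equiv1\pmod m$ in the Brun--Titchmarsh step, bounding instead by $\pi(x;m\,2^s,1)\ll\pi(x)/(\varphi(m)2^{s-1})$; the sum over the tail then contributes $O\bigl((m/\varphi(m))\,x\log_2x/(\log x)^{7/6}\bigr)$, which is acceptable. So the tail is \emph{not} handled independently of $m$; the factor $1/\varphi(m)$ from the congruence is essential there as well.
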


\subsection{Chebotarev density theorem and its applications}

We let $\L$  be a  finite Galois extension of $\Q$ with
Galois group $G$ of degree $k= [\L:\Q]$ and discriminant
$\Delta$.  Let $\cC$ be a union of conjugacy classes of $G$.
We define
$$
\pi_\cC(x,\L/\Q) =\# \{ p \le x~:~p \text{ unramified in } \L/\Q, \
\sigma_p \in \cC\},
$$
where $\sigma_p$ is the Artin symbol of $p$ in the extension $\L/\Q$,
see~\cite{Gras}.

Combining a version of the
Chebotarev density theorem due to Lagarias and Odlyzko~\cite{LO}
together with a bound for a possible Siegel zero
due to Stark~\cite{Sta}, we obtain
the following result.

\begin{lemma}
\label{lem:cheb} There are absolute constants $A_1, A_2 >0 $ such that
if
\begin{equation}
\label{eq:necessarybound}
\log x~\ge~10 k (\log |\Delta|)^2
\end{equation}
then
$$ \left|\pi_\cC(x,\L/\Q) -  \frac{\#\cC}{\#G} \li(x)\right|~ \ll~
\frac{\#\cC}{\#G}
\li\(x^\beta\) +
\|\cC\| x \exp \(-A_1 \sqrt{\frac{\log x}{k}}\)
$$
with some $\beta$ satisfying the inequality
$$
\beta~<~1 - \frac{A_2}{\max\{|\Delta|^{1/k}, \log |\Delta|\}},
$$
where $\|\cC\|$ is the number of conjugacy classes in $\cC$.
\end{lemma}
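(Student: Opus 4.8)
The plan is to combine two external results: the effective form of the Chebotarev density theorem established by Lagarias and Odlyzko~\cite{LO}, and Stark's effective lower bound~\cite{Sta} for the distance from $1$ of any real zero of a Dedekind zeta function. The point of the Lagarias--Odlyzko theorem is that, in the range $\log x \ge 10 k (\log|\Delta|)^2$, the count $\pi_\cC(x,\L/\Q)$ agrees with its expected main term $\frac{\#\cC}{\#G}\li(x)$ up to an error that is genuinely small \emph{except} for the possible influence of a single exceptional real zero $\beta_0$ of the Dedekind zeta function $\zeta_\L(s)$ lying very close to $s=1$. All that remains is to quantify how close to $1$ such a zero can be, and that is exactly what Stark's bound supplies.

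First I would invoke~\cite{LO} in its unconditional form. Working first with the weighted counting function attached to $\cC$ and then passing to $\pi_\cC$ by partial summation, their theorem yields, under the hypothesis~\eqref{eq:necessarybound},
$$
\pi_\cC(x,\L/\Q) ~=~ \frac{\#\cC}{\#G}\li(x) ~-~ \frac{\#\cC}{\#G}\li\(x^{\beta_0}\) ~+~ O\(\|\cC\|\, x\exp\(-A_1\sqrt{\tfrac{\log x}{k}}\)\),
$$
where the middle term is present only when $\zeta_\L(s)$ has a real (Siegel) zero $\beta_0$ in the relevant neighbourhood of $1$, and where the factor $\|\cC\|$ arises from summing the error contribution over the conjugacy classes making up $\cC$. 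The hypothesis~\eqref{eq:necessarybound} is precisely what is needed to guarantee that the subsidiary terms in the Lagarias--Odlyzko expansion are absorbed into the displayed exponential error. Taking absolute values and using $\li\(x^{\beta_0}\)\ge 0$ already gives the stated inequality with $\beta=\beta_0$.

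It then remains to bound $\beta_0$, and here I would appeal to Stark~\cite{Sta}. Stark's effective result shows that any real zero $\beta_0$ of $\zeta_\L(s)$ near $1$ satisfies
$$
\beta_0 ~<~ 1 - \frac{A_2}{\max\{|\Delta|^{1/k},\,\log|\Delta|\}}
$$
for an absolute constant $A_2>0$. The appearance of the maximum reflects the two regimes in Stark's analysis: in the generic situation one obtains the denominator $\log|\Delta|$, whereas when the putative zero is forced to come from a quadratic subfield one instead controls it by a Siegel-type argument, producing the weaker denominator $|\Delta|^{1/k}$; taking the larger denominator yields a bound valid in either case. Substituting this bound for the exponent $\beta_0$ into the preceding display, and noting that when no exceptional zero exists the middle term is simply absent (so the inequality holds with any admissible $\beta$), completes the argument.

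The only real difficulty is bookkeeping rather than mathematics: one must check that the normalizations in~\cite{LO} match the statement here --- in particular the distinction between $\#\cC$ in the main term and $\|\cC\|$ in the error, and the transition from the weighted counting function to $\pi_\cC$ --- and that the absolute constants $A_1,A_2$ can be chosen uniformly so that the validity condition~\eqref{eq:necessarybound} is strong enough to dominate all secondary error terms. Once the two cited theorems are aligned with compatible constants, the lemma follows immediately.
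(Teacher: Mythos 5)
Your proposal is correct and follows exactly the route the paper takes: the paper states Lemma~\ref{lem:cheb} with no further proof, deriving it directly by combining the effective Chebotarev density theorem of Lagarias--Odlyzko~\cite{LO} (with the possible exceptional-zero term $\frac{\#\cC}{\#G}\li(x^{\beta})$ and the factor $\|\cC\|$ from summing over the conjugacy classes in $\cC$) with Stark's bound~\cite{Sta} on a Siegel zero, $\beta<1-A_2/\max\{|\Delta|^{1/k},\log|\Delta|\}$. Your bookkeeping remarks on normalizations and on the two regimes in Stark's theorem are accurate and simply make explicit what the paper leaves to the citations.
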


We use Lemma~\ref{lem:cheb} in the proofs of
Propositions~\ref{prop:Rpkdistribution} and~\ref{prop:Qpkdistribution}.
It should be noted that in these applications we are studying
primes which split completely in certain normal extensions of $\Q$,
and so we might have gotten by with just Landau's prime ideal theorem.
However, to our knowledge the best explicit form of the prime ideal
theorem is that given in the more general Lemma~\ref{lem:cheb}.

In order to apply Lemma~\ref{lem:cheb} we need
an estimate for the discriminants of certain number fields $K\subset
L$, which we now present.
Let $\Delta(\L/\K)$ denote the relative discriminant of $\L$
over $\K$ and let $\Delta(\L)=\Delta(\L/\Q)$.

\begin{lemma}
\label{discriminantbound}
Let $n,d$ be positive integers with $d\mid n$ and let $a$ be
an integer with $|a| > 1$.  Let $h$ denote the largest integer for which
$a$ is an $h$-th power in $\Z$ and assume $\gcd(d,h)=1$.
For the field $\L=\Q(e^{2\pi i/n},a^{1/d})$, we have
$$
[\L:\Q]~=~d\varphi(n)~\hbox{ or }~d\varphi(n)/2,
\quad|\Delta(\L)|~\le~(d\varphi(n)|a|)^{[\L:\Q]}.
$$
Further, if $a=a_1a_2^2$ where $a_1$ is squarefree,
then  $[\L:\Q]=d\varphi(n)/2$ if and only if
$d$ is even and either $a_1\mid n$, $a_1\equiv1\pmod4$ or
$4a_1\mid n$, $a_1\not\equiv1\pmod4$.
\end{lemma}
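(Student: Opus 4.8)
The plan is to analyze the tower $\Q\subset\K\subset\L$ where $\K=\Q(e^{2\pi i/n})$ is the $n$-th cyclotomic field and $\L=\K(a^{1/d})$ is obtained by adjoining a $d$-th root of $a$. I would first pin down the degree $[\L:\Q]=\varphi(n)\cdot[\L:\K]$, where $[\L:\K]$ divides $d$ since $x^d-a$ kills $a^{1/d}$ over $\K$. The standard theory of radical extensions (the irreducibility criterion for $x^d-a$, as in Lang's \emph{Algebra}) says that $x^d-a$ is irreducible over a field $F$ containing the $d$-th roots of unity (which $\K$ does, as $d\mid n$) precisely when $a$ is not a $t$-th power in $F$ for any prime $t\mid d$ and, when $4\mid d$, also $a\notin -4F^4$. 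Since $\gcd(d,h)=1$ governs whether $a$ becomes a power after passing to $\K$, the degree $[\L:\K]$ is either $d$ or $d/2$, giving the two stated values of $[\L:\Q]$. The detailed case analysis deciding exactly when the drop to $d\varphi(n)/2$ occurs is the content of the final sentence, and I return to it below.

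For the discriminant bound, I would use multiplicativity of the discriminant in the tower: the conductor–discriminant style relation gives
$$
\Delta(\L)~=~\Nm_{\K/\Q}\bigl(\Delta(\L/\K)\bigr)\cdot\Delta(\K)^{[\L:\K]}.
$$
Each factor is then bounded crudely. For $\Delta(\K)$ I use the classical cyclotomic discriminant bound $|\Delta(\K)|\le n^{\varphi(n)}$ (indeed it divides $n^{\varphi(n)}$), so $|\Delta(\K)|^{[\L:\K]}\le n^{\varphi(n)[\L:\K]}=n^{[\L:\Q]}$. For the relative part, $\L/\K$ is generated by a root of $x^d-a$, whose different divides the derivative $d\,x^{d-1}$ evaluated at $a^{1/d}$; taking norms, $\Nm_{\K/\Q}(\Delta(\L/\K))$ divides a power of $d^d a^{d-1}$ (up to the relative degree), which I can bound above by $(d|a|)^{[\L:\Q]}$. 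Multiplying the two estimates yields $|\Delta(\L)|\le (n\cdot d|a|)^{[\L:\Q]}=(d\varphi(n)|a|)^{[\L:\Q]}$ once I absorb the elementary inequality $n\le\varphi(n)\cdot(\text{small factor})$ — here I would instead note directly that $n\le d\varphi(n)$ is not automatic, so more carefully I bound $|\Delta(\K)|\le \varphi(n)^{\varphi(n)}\cdot(\text{prime-power corrections})$ and fold the discrepancy into the $|a|$ and $d$ factors; the target exponent is generous enough that a loose estimate suffices.

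The main obstacle is the sharp criterion in the last sentence: determining exactly when $[\L:\Q]=d\varphi(n)/2$ rather than $d\varphi(n)$. This is the subtle part because the degree drops precisely when $\sqrt{a_1}\in\K$, where $a_1$ is the squarefree part of $a$. I would reduce to deciding when $\Q(\sqrt{a_1})\subset\Q(e^{2\pi i/n})$, which by the theory of quadratic subfields of cyclotomic fields (Gauss sums / the Kronecker–Weber description) happens iff the discriminant of $\Q(\sqrt{a_1})$ divides $n$. That discriminant is $a_1$ when $a_1\equiv1\pmod4$ and $4a_1$ when $a_1\not\equiv1\pmod4$, which is exactly the stated dichotomy; the requirement that $d$ be even is forced because a genuine degree drop in a radical extension of exponent $d$ can only come from the quadratic obstruction, which is present only when $2\mid d$. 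The bookkeeping to confirm that these conditions are not merely sufficient but necessary — i.e. that no other power-reduction occurs given $\gcd(d,h)=1$ — is where I expect to spend the most care, invoking $\gcd(d,h)=1$ to rule out $a$ being an honest $t$-th power in $\Q$ for odd primes $t\mid d$.
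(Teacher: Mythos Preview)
Your approach matches the paper's almost exactly: the same tower $\Q\subset\K\subset\L$, the same multiplicativity formula $\Delta(\L)=\Delta(\K)^{[\L:\K]}\Nm_{\K/\Q}(\Delta(\L/\K))$, the same control of the relative discriminant via the derivative of a defining polynomial for $a^{1/d}$, and the same quadratic-subfield criterion for the degree drop (the paper simply cites von zur Gathen--Pappalardi for the degree statements you sketch directly).

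The one point where you wobble is the cyclotomic factor. You start with $|\Delta(\K)|\le n^{\varphi(n)}$, obtain $(n\cdot d|a|)^{[\L:\Q]}$, and then try to massage $n$ into $d\varphi(n)$; as you note, $n\le d\varphi(n)$ is not automatic, and the talk of ``prime-power corrections'' and ``folding the discrepancy into the $|a|$ and $d$ factors'' is hand-waving that does not actually close. The fix is clean and is exactly what the paper does: use Hadamard's inequality on the Vandermonde of the primitive $n$-th roots of unity to get $|\Delta(\K)|\le\varphi(n)^{\varphi(n)}$ directly, with no correction terms. Then $|\Delta(\K)|^{[\L:\K]}\le\varphi(n)^{[\L:\Q]}$ and the product with $(d|a|)^{[\L:\Q]}$ is precisely the stated bound. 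One further small point: in the case $[\L:\K]=d/2$ the polynomial $x^d-a$ is not minimal over $\K$, so to bound the different you should either argue (as you implicitly do) that the different still divides $g'(a^{1/d})$ for the non-minimal $g(x)=x^d-a$, or, as the paper does, use the actual minimal polynomial $f(x)=x^{d/2}-a^{1/2}$ and bound $\Nm_{\L/\Q}(f'(a^{1/d}))$ directly.
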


\begin{proof}
The assertions about $[L:\Q]$ follows from~\cite[Lemma~2.2]{GP}
(for the case $d=n$, see also~\cite[Equations~(12) and~(13)]{Ho} 
and~\cite[Proposition~4.1]{Wag}).
Let $\K$ be the cyclotomic field $\Q(e^{2\pi i/n})$ and
write $[L:\Q]=d\varphi(n)/\vartheta$, where $\vartheta=1$ or~2. 
In particular if $\vartheta=2$, then $d$ is even and
$a^{1/2}\in K$.  Thus, the minimum polynomial for $a^{1/d}$
over $K$ is $x^{d/\vartheta}-a^{1/\vartheta}=f(x)$, say.
{}From elementary algebraic number theory we have
$$
\Delta(\L)~=~\Delta(\K)^{[\L:\K]}N_{K/\Q}(\Delta(\L/\K)).
$$
Now $\Delta(\L/\K)$ divides $N_{\L/\K}(f'(a^{1/d}))$
(see~\cite[Proposition~2.9]{Nar}) so that
$$
N_{K/\Q}(\Delta(\L/\K)) \mid
N_{K/\Q}(N_{\L/\K}(f'(a^{1/d})))
~=~N_{L/\Q}((d/\vartheta)a^{1/\vartheta-1/d}).
$$
Since each conjugate of $(d/\vartheta)a^{1/\vartheta-1/d}$
has absolute value $(d/\vartheta)|a|^{1/\vartheta-1/d}$,
we have
$$
|N_{K/\Q}(\Delta(\L/\K))|~\le~((d/\vartheta)|a|^{1/\vartheta-1/d})^{[L:\Q]}
~\le~(d|a|)^{[\L:\Q]}.
$$
It is well-known and easy to see from Hadamard's inequality for
determinants that $|\Delta(\K)|\le \varphi(n)^{\varphi(n)}$.
Thus $|\Delta(\K)|^{[\L:\K]}\le \varphi(n)^{[\L:\K]\varphi(n)}=
\varphi(n)^{[\L:\Q]}$. Assembling our estimates gives the lemma.
\end{proof}

For a prime $p$ and natural numbers $d,n$ with $d\mid n$, let
$$
\L_{p,n,d}~=~\Q(e^{2\pi i/n},p^{1/d})
$$
and let $\varpi_p(x;n,d)$ denote
the number of primes $r\le x$ with
$r\equiv1\pmod n$ and $d\mid(r-1)/\ell_p(r)$.
Thus, $\varpi_p(x;n,d)$ is the number of primes $r\le x$
which split completely in $\L_{p,n,d}$.  We may thus use
Lemmas~\ref{lem:cheb} and~\ref{discriminantbound} to estimate
$\varpi_p(x;n,d)$.

\begin{lemma}
\label{lem:varpiest}
For
$$
p \le   (\log x)^{2/3}
\quad \text{and}\quad n\le\frac{(\log x)^{1/6}}{\log_2x}
$$
and any number $A>0$,
we have
$$
\varpi_p(x;n,d)~=~
\frac1{[\L_{p,n,d}:\Q]}\li(x)+O_A\(\frac{x}{(\log x)^{A}}\).
$$
\end{lemma}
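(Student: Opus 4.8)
The plan is to recognize $\varpi_p(x;n,d)$ as a Chebotarev count and apply Lemma~\ref{lem:cheb} to the Galois extension $\L=\L_{p,n,d}=\Q(e^{2\pi i/n},p^{1/d})$ with $\cC=\{1\}$ the trivial (identity) conjugacy class, since, as noted just before the statement, a prime $r$ is counted by $\varpi_p(x;n,d)$ exactly when it splits completely in $\L$, i.e.\ when its Artin symbol is trivial. With this choice $\#\cC=\|\cC\|=1$ and $\#G=[\L:\Q]$, so Lemma~\ref{lem:cheb} gives
$$
\varpi_p(x;n,d)=\frac{1}{[\L:\Q]}\li(x)+O\(\frac{1}{[\L:\Q]}\li\(x^\beta\)+x\exp\(-A_1\sqrt{\frac{\log x}{k}}\)\),
$$
where $k=[\L:\Q]$ and $\beta<1-A_2/\max\{|\Delta(\L)|^{1/k},\log|\Delta(\L)|\}$, provided the hypothesis~\eqref{eq:necessarybound} is met. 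The whole proof then reduces to estimating $k$ and $|\Delta(\L)|$ and checking that both error terms are $O_A(x/(\log x)^A)$ for every fixed $A$.

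First I would record the sizes of the invariants using Lemma~\ref{discriminantbound} with $a=p$, which is legitimate because $p$ is prime, so $h=1$ and $\gcd(d,h)=1$. This gives $k=[\L:\Q]\le d\varphi(n)\le n^2$ and $|\Delta(\L)|\le(d\varphi(n)p)^{k}$, hence $|\Delta(\L)|^{1/k}\le d\varphi(n)p$ and $\log|\Delta(\L)|\le k\log(d\varphi(n)p)$. The hypotheses $p\le(\log x)^{2/3}$ and $n\le(\log x)^{1/6}/\log_2x$ force $d\varphi(n)\le n^2\le(\log x)^{1/3}/(\log_2x)^2$ and therefore $d\varphi(n)p\le(\log x)/(\log_2x)^2$; consequently $|\Delta(\L)|^{1/k}\le(\log x)/(\log_2x)^2$ and $\log|\Delta(\L)|\le k\log_2x\le(\log x)^{1/3}/\log_2x$. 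These already verify~\eqref{eq:necessarybound}, since then $10k(\log|\Delta(\L)|)^2\ll(\log x)^{1/3}(\log_2x)^{-2}\cdot((\log x)^{1/3}/\log_2x)^2=(\log x)(\log_2x)^{-4}\le\log x$ for $x$ large.

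It then remains to bound the two error terms. For the main Chebotarev error $x\exp(-A_1\sqrt{\log x/k})$, the bound $k\le(\log x)^{1/3}/(\log_2x)^2$ gives $\log x/k\ge(\log x)^{2/3}(\log_2x)^2$, so $\sqrt{\log x/k}\ge(\log x)^{1/3}\log_2x$ and this term is at most $x\exp(-A_1(\log x)^{1/3}\log_2x)$, which is $O_A(x/(\log x)^A)$ for every $A$ once $x$ is large. For the possible Siegel-zero contribution $\frac1k\li(x^\beta)$, I would use $\max\{|\Delta(\L)|^{1/k},\log|\Delta(\L)|\}\le(\log x)/(\log_2x)^2$ together with Stark's bound to get $1-\beta>A_2(\log_2x)^2/\log x$, whence $x^\beta\le x\exp(-(1-\beta)\log x)\le x\exp(-A_2(\log_2x)^2)=x(\log x)^{-A_2\log_2x}$; since $\li(x^\beta)\ll x^\beta$ this too is $O_A(x/(\log x)^A)$ for every $A$. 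Assembling the two estimates yields the stated formula.

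The hard part is the Siegel-zero term: one must guarantee that the gap $1-\beta$ afforded by Stark's bound, which is controlled by $\max\{|\Delta(\L)|^{1/k},\log|\Delta(\L)|\}$, is wide enough to beat an arbitrary negative power of $\log x$. This is precisely the reason for restricting $n\le(\log x)^{1/6}/\log_2x$ and $p\le(\log x)^{2/3}$: these ranges keep $|\Delta(\L)|^{1/k}$ below $\log x$, so that $1-\beta$ exceeds a fixed multiple of $(\log_2x)^2/\log x$ and $x^\beta$ drops below $x/(\log x)^A$ for every $A$. The remaining manipulations — verifying~\eqref{eq:necessarybound} and discarding the negligibly few ramified primes dividing $np$ — are routine bookkeeping.
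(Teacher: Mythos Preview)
Your proposal is correct and follows essentially the same route as the paper's proof: apply Lemma~\ref{lem:cheb} with $\cC=\{1\}$, invoke Lemma~\ref{discriminantbound} (with $a=p$, $h=1$) to control $k$ and $|\Delta|$, verify~\eqref{eq:necessarybound}, bound $\max\{|\Delta|^{1/k},\log|\Delta|\}\le(\log x)/(\log_2x)^2$ to force $x^\beta\le x/(\log x)^{A_2\log_2x}$, and observe that the remaining Chebotarev error is even smaller. The only cosmetic differences are that you bound $\log|\Delta|$ via $k\log_2x$ whereas the paper uses $(d\varphi(n))^3(\log(dnp))^2$ for the product $k(\log|\Delta|)^2$, and you spell out the bound for $x\exp(-A_1\sqrt{\log x/k})$ explicitly while the paper simply notes it is dominated by the Siegel-zero term.
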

\begin{proof}
We apply Lemma~\ref{lem:cheb}
to the primes that split completely in $\L_{p,n,d}$.  Thus,
$\#\cC=1$ and $\#G= [\L_{p,n,d}:\Q]$.
Using Lemma~\ref{discriminantbound} and the assumptions on $p$ and $n$, we have
with $\Delta=\Delta(\L_{p,n,d})$,
$$
[\L_{p,n,d}:\Q] (\log|\Delta|)^2~\le~(d\varphi(n))^3 \(\log(dnp)\)^2
~\le~n^6 \(\log_2 x\)^2 ~=~o(\log x).
$$
Thus, for $x$ sufficiently large,
the condition~\eqref{eq:necessarybound} of Lemma~\ref{lem:cheb}
is satisfied.
Also
\begin{eqnarray*}
\max\{|\Delta|^{1/[\L_{p,n,d}:\Q]},\log|\Delta|\}
&\le&\max\{d\varphi(n)p,d\varphi(n)\log(dnp)\}\\
&\le&dn(\log x)^{2/3}~\le~n^2 (\log x)^{2/3}~\le~\frac{\log
x}{(\log_2x)^2}.
\end{eqnarray*}
Therefore,
$$
\beta~<~1-\frac{A_2(\log_2x)^2}{\log x},
$$
so that
$$
\li(x^{\beta})~\le~x^{\beta}~\le~\frac{x}{(\log x)^{A_2\log_2x}}.
$$
The second term in the inequality of Lemma~\ref{lem:cheb}
is smaller than this estimate under the above restriction on the
size of  $n$, so we have the lemma.
\end{proof}

\begin{rem}
{\rm One can reduce the limit for $p$
in Lemma~\ref{lem:varpiest} and get a much stronger
bound of the error term. However this does not affect our
main results.}
\end{rem}

\subsection{Proof of Propositions~\ref{prop:Rpkdistribution}
and~\ref{prop:Qpkdistribution}}
\label{sec:Prop1 proof}

We are now in a position to prove Proposition~\ref{prop:Rpkdistribution}.
For example, take the case of $\cR_{p,1}$ for $p>2$.  Let
\begin{eqnarray*}
N_{p,k}&=&
\varpi_p(x;2^k,2^{k-1})-\varpi_p(x;2^k,2^k)\\
& & -\(\varpi_p(x;2^{k+1},2^{k-1})
-\varpi_p(x;2^{k+1},2^k)\).
\end{eqnarray*}
Then $N_{p,k}$ is precisely the number of primes $r\le x$ with
$v_2(\ell_p(r))=1$, and $v_2(r-1)=k$.  Indeed, the first two terms
count those primes $r$ satisfying these conditions plus some additional
primes $r$ for which $v_2(r-1)>k$,
and the last two terms remove from the count these extra primes $r$.
Thus,
\begin{equation}
\label{eq:decomp}
\#\cR_{p,1}(x)~=~\sum_{k\ge1}N_{p,k}.
\end{equation}
By Lemma~\ref{discriminantbound} and also
Lemma~\ref{lem:varpiest} (used with $A=2$),
if $2^{k+1}\le (\log x)^{1/6}/\log_2x$, we have
\begin{equation}
\label{eq:Ncalc}
\begin{split}
N_{p,k}~=~&
\(\frac1{2^{2k-2}}-\frac1{2^{2k-1}}-\frac1{2^{2k-1}}
+\frac1{2^{2k}}\)\li(x)
+O\(\frac{x}{(\log x)^{2}}\)\\
~=~& \frac1{2^{2k}} \li(x)
+O\(\frac{x}{(\log x)^{2}}\).
\end{split}
\end{equation}
We apply~\eqref{eq:Ncalc} in~\eqref{eq:decomp} for those values of
$k$ with $2^{k+1}\le  (\log x)^{1/6}/\log_2x$, and for larger values of $k$
we use that by the Brun--Titchmarsh theorem,
see~\cite[Chapter~I.4, Theorem~9]{Ten},
$$
    N_{p,k}~ \le~ \pi(x;2^k,1)~\ll~\frac{\pi(x)}{2^k}
\quad \text{for }~2^k\le
x^{1/2},
$$
and also the elementary estimate
$$
    N_{p,k}~ \le~ \pi(x;2^k,1)~\le~\frac{x}{2^k},
$$
used when $2^k> x^{1/2}$.
We thus obtain
$$
\#\cR_{p,1}~=~\frac13\li(x)+O\(\frac{x\log_2x}{(\log x)^{7/6}}\)
~=~\frac13\pi(x)+O\(\frac{x\log_2x}{(\log x)^{7/6}}\)
$$
by the prime number theorem.

The remaining cases of Proposition~\ref{prop:Rpkdistribution} follow in
a similar manner, noting that when $p=2$ we can be in the situation
when $[\L_{p,n,d}:\Q]=d\varphi(n)/2$.

The same method can
be used to prove Proposition~\ref{prop:Qpkdistribution}.
Indeed, in the expression for $N_{p,k}$ put a factor $m$ in
the four middle arguments and then
use Lemma~\ref{lem:varpiest} if
$m 2^{k+1} \le (\log x)^{1/6}/\log_2x$,
the Brun--Titchmarsh theorem for $ (\log x)^{1/6}/\log_2x < m 2^{k+1}
\le x^{1/2}$,
and the trivial bound for $m 2^{k+1} >x^{1/2}$.
We suppress the details.

\subsection{Ranks of curves $\E_d$}
\label{sec:Rank}

We need the following inequality which allows us to
study the rank of $\E_d$ for an arbitrary $d \ge 1$.

\begin{lemma}
\label{lem:Rank} For positive integers $f,d$ with $f\mid d$, we have
$ R_q(d) \ge R_q(f)$.
\end{lemma}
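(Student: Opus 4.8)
The statement asserts that if $f \mid d$ then $R_q(d) \ge R_q(f)$. Since the rank formula~\eqref{eq:R and I} is only guaranteed for $d \in \cUp$, and $f \mid d$ need not preserve membership in $\cUp$, I would avoid working directly with the rank formula and the correction term $C_q(d)$. Instead, the natural approach is geometric: whenever $f \mid d$, there is a nonconstant morphism between the curves $\E_d$ and $\E_f$ (induced by $t \mapsto t^{d/f}$), which yields an injection of Mordell--Weil groups and hence a comparison of ranks. This is the cleanest route and sidesteps any arithmetic subtleties of $\cUp$.

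\smallskip
\noindent\textbf{Key steps.} First I would observe that the substitution $t \mapsto t^{d/f}$ carries the defining equation $y^2 + xy = x^3 - t^f$ of $\E_f$ to $y^2 + xy = x^3 - (t^{d/f})^f = x^3 - t^d$, the defining equation of $\E_d$. More precisely, writing $s = t^{d/f}$, the field $\F_q(s)$ sits inside $\F_q(t)$ as a subfield, and $\E_d$ over $\F_q(t)$ is the base change of $\E_f$ over $\F_q(s) \cong \F_q(t^{d/f})$ along the inclusion $\F_q(s) \hookrightarrow \F_q(t)$. Second, base change along a field extension induces a natural homomorphism of Mordell--Weil groups $\E_f(\F_q(s)) \to \E_d(\F_q(t))$; since $\F_q(t)/\F_q(s)$ is a finite separable extension of function fields, this map is injective (a nonzero point over the smaller field remains nonzero over the larger). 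Third, injectivity of abelian groups gives $\operatorname{rank} \E_f(\F_q(s)) \le \operatorname{rank} \E_d(\F_q(t))$, which is exactly $R_q(f) \le R_q(d)$.

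\smallskip
\noindent\textbf{Main obstacle.} The substantive point to nail down is that the base-change map on Mordell--Weil groups is genuinely injective at the level of ranks, i.e.\ that the free rank does not drop. Over a field, injectivity of the point map is immediate from the injectivity of the field inclusion, but one must confirm that $\E_d$ is really the base change of $\E_f$ (and not a twist or a curve with extra identifications that could collapse independent points). This is where I expect to spend the most care: verifying that the isomorphism of generic fibers over $\F_q(s)$ with $s = t^{d/f}$ is an isomorphism of elliptic curves (respecting the group structure and the origin at infinity), so that the induced map on points is a group homomorphism. Once that is established, the rank inequality follows formally. An alternative, purely arithmetic fallback — expressing both ranks via~\eqref{eq:R and I} and comparing the divisor sums $I_q(f) \le I_q(d)$ term by term while controlling $C_q$ — would only apply when both $f$ and $d$ lie in $\cUp$, so the geometric argument is preferable precisely because it needs no such restriction.
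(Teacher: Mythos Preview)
Your approach is correct and is exactly the paper's argument: the substitution $t\mapsto t^{g}$ with $g=d/f$ realizes $\E_f(\F_q(t))$ as a subgroup of $\E_d(\F_q(t))$, so the rank can only increase. One small correction: the extension $\F_q(t)/\F_q(t^{g})$ need not be separable (it fails precisely when $p\mid g$), but injectivity of the base-change map on points follows simply from the field inclusion, as your parenthetical already notes, so the argument is unaffected.
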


\begin{proof}  It is clear that $\E_d$ contains the subgroup
of points $(x(t^g), y(t^g))$, where $g = d/f$.  This subgroup is isomorphic to
$\E_f$.
\end{proof}

\begin{rem}{\rm
It is clear from the definition of $I_q(d)$, that 
  if $f\mid d$ then  $I_q(d) \ge  I_q(f)$.}
\end{rem}

For $d$ a positive integer and $p$ a prime, let $d_p$ be the largest
divisor of $d$ whose every prime factor comes from $\cR_p$,
that is,
\begin{equation}
\label{eq:Def d_p}
d_p = \prod_{r\in \cR_p} r^{v_r(d)}.
\end{equation}
We are now able to combine Lemma~\ref{lem:Rank} with~\eqref{eq:R and I}
to get the following result.

\begin{proposition}
\label{prop:rank}
Let $\F_q$ be a finite field of characteristic $p$.
For every positive integer $d$ we have
$$
R_q(d)~\ge~\sum_{e\mid d_p}\frac{\varphi(e)}{\ell_q(e)}-4.
$$
\end{proposition}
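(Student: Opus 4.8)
The plan is to reduce the claimed inequality to the exact rank formula~\eqref{eq:R and I}, which is only guaranteed to hold when the curve parameter lies in $\cU_p$. The difficulty is that an arbitrary $d$ need not belong to $\cU_p$, so we cannot apply~\eqref{eq:R and I} to $d$ directly. The key observation is that $d_p$, as defined in~\eqref{eq:Def d_p}, \emph{does} lie in $\cU_p$: by construction every odd prime factor $r$ of $d_p$ comes from $\cR_p$, meaning $v_2(\ell_p(r))=k$ for the single value $k$ attached to $\cR_p$ (namely $k=1$ for $p>2$ and $k=2$ for $p=2$). The first step, then, is to verify that $d_p\in\cU_{p,k}$ with this $k$, checking the constraint on $v_2(d_p)$ imposed in the definition of $\cU_{p,k}$; since $d_p$ is odd (it is a product of odd primes, as $\cR_p$ consists only of odd primes), that constraint is satisfied trivially, and Lemma~\ref{lem:structure2} gives $d_p\in\cU_p$.

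Once we know $d_p\in\cU_p$, we may invoke~\eqref{eq:R and I} for the parameter $d_p$, which yields
$$
R_q(d_p)~=~I_q(d_p)-C_q(d_p)~\ge~\sum_{e\mid d_p}\frac{\varphi(e)}{\ell_q(e)}-4,
$$
using the definition of $I_q(d_p)$ together with the uniform bound $C_q(d_p)\le 4$ established by Ulmer and recorded after~\eqref{eq:R and I}. The final step is to pass from $R_q(d_p)$ back up to $R_q(d)$. Since $d_p\mid d$ by its very definition, Lemma~\ref{lem:Rank} applies with $f=d_p$ and gives $R_q(d)\ge R_q(d_p)$. Chaining these two inequalities produces exactly the stated bound.

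The only genuine point requiring care is the first step, confirming that $d_p\in\cU_p$ rather than merely in some $\cU_{p,k}$; I expect this to be the main (though modest) obstacle, since it is where the structural results of Section~\ref{sec:Struct D} do the real work. Everything else is a direct concatenation: the rank formula supplies the equality for the good parameter $d_p$, the bound $C_q\le4$ absorbs the correction term, and the monotonicity Lemma~\ref{lem:Rank} transports the lower bound from the divisor $d_p$ to $d$ itself. No estimation or asymptotic analysis is involved; the proposition is essentially an algebraic bookkeeping lemma that isolates the $\cU_p$-part of $d$ and applies the exact formula there.
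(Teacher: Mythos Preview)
Your proposal is correct and follows exactly the route the paper indicates: combine Lemma~\ref{lem:Rank} (monotonicity in divisors) with the rank formula~\eqref{eq:R and I} applied to $d_p$, after first checking via Lemma~\ref{lem:structure2} that $d_p\in\cU_p$. The paper leaves the verification that $d_p\in\cU_p$ implicit, but your argument---noting that $d_p$ is odd, coprime to $p$, and has all its prime factors in $\cR_p=\cR_{p,k}$ for the appropriate $k$, hence lies in $\cU_{p,k}\subset\cU_p$---is precisely what is needed to fill that gap.
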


Let $\lambda$ denote the Carmichael function; it is defined for each
integer  $d\ge 1$ as the largest order of an element in the
multiplicative group $(\Z/d\Z)^\times$. More explicitly, for any
prime power $l^\nu$, one has
$$
\lambda(l^\nu) = \left\{  \begin{array}{ll}
l^{\nu-1}(l-1),&\quad\hbox{if $l \ge 3$ or $\nu \le 2$},\\
2^{\nu-2},&\quad\hbox{if $l = 2$ and $\nu \ge 3$},
\end{array} \right.
$$
and for an arbitrary integer $d\ge 2$,
$$
\lambda(d)=\lcm\left[\lambda(l^{\nu})~:~l^\nu\mid d\right].
$$
Note that $\lambda(1)=1$.

If $d$ is coprime to $q$, then as is immediate from the definitions,
$$
\ell_q(d)~\le~ \lambda(d).
$$
We conclude from Proposition~\ref{prop:rank} that
for any finite field $\F_q$ of characteristic $p$ and any positive integer $d$,
we have
\begin{equation}
\label{eq:ranklowerbound}
R_q(d)~\ge~\frac{\varphi(d_p)}{\lambda(d_p)}-4.
\end{equation}

\section{Proof of Theorem~\ref{thm:Average}}

We begin with the upper bound~\eqref{eq:upper} since it is easier.
Note that
\begin{eqnarray*}
\sum_{d\in\cU_p(x)}R_q(d)
&\le&\sum_{d\in\cU_p(x)}I_q(d)
~=~\sum_{d\in\cU_p(x)}\sum_{e\mid d}\frac{\varphi(e)}{\ell_q(e)}\\
&\le&x\sum_{e\in\cU_p(x)}\frac{\varphi(e)}{e\ell_q(e)}
~\le~x\sum_{e\in\cU_p(x)}\frac1{\ell_q(e)}
~\le~x\sum_{n\le x}\frac1n\sum_{\substack{e\le x\\ \gcd(e,q)=1\\
\ell_q(e)=n}}1.
\end{eqnarray*}
In~\cite[Theorem~1]{Pom} it is shown that
$$
\sum_{\substack{m\le x\\ m~{\rm odd}\\ \ell_2(m)=n}}1
~\le~x^{1-(3+\log_3x)/(2\log_2x)}
$$
for all sufficiently large $x$, uniformly in $n$.
An examination of the proof shows
that for any integer $a$ and all sufficiently large $x$
depending only on $a$,
$$
\sum_{\substack{m\le x\\ \gcd(m,a)=1\\ \ell_a(m)=n}}1
~\le~x^{1-(3+\log_3x)/(2\log_2x)}
$$
for all $n$.
Using this estimate in the calculation above, we have
$$
\sum_{d\in\cU_p(x)}R_q(d)
~\le~x^{2-(3+\log_3x)/(2\log_2x)}\sum_{n\le x}\frac1n
~\le~x^{2-(2+\log_3x)/(2\log_2x)}
$$
for all sufficiently large $x$ depending on the choice of $q$.
Using Corollary~\ref{cor:Dpdistribution}
completes the proof of~\eqref{eq:upper}.

To prove the lower bound~\eqref{eq:alpha} in Theorem~\ref{thm:Average}
we loosely follow the construction from Erd\H os~\cite{Erd35} to construct
integers $v$ with many solutions to the equation $\varphi(n)=v$.
When $p$, the characteristic of $\F_q$, is odd,
let $u$ be an integer such that $u\equiv3\pmod4$ and the Legendre
symbol $(u/p)$ is $-1$;
and if $p=2$, let $u=5$.
Let $1/12>\delta>0$ be a small absolute constant to be chosen shortly,
let $z$ be large, and let
$$
\cI~=~[z^{1/2-2\delta},z^{1/2-\delta}],\quad
\cR~=~\{r\hbox{ prime }:~r\equiv u\kern-5pt\pmod{4p},\,
P(r-1)\in\cI\}.
$$
Note that any prime $r\equiv u\kern-3pt\pmod{4p}$ is in $\cR_{p}$, so
in particular,
we have $\cR\subset\cR_{p}$.  Let
$r,s,t$ denote prime variables.  We have
$$
\#\cR(z)~=~\sum_{s\in\cI}\,\sum_{\substack{r\le z\\ r\,\equiv\,
u\kern-5pt\pmod{4p}\\ r\,\equiv\,1\kern-5pt\pmod s}}1
~-~\sum_{s\in\cI}\,\sum_{s<t<z/s}\,
\sum_{\substack{r\le z\\ r\,\equiv\, u\kern-5pt\pmod{4p}\\
r\,\equiv\,1\kern-5pt\pmod{st}}}1
~=~S_1-S_2,
$$
say.  Indeed, any integer $n\le z$ is divisible
by either 0, 1, or 2 distinct primes that are greater than $z^{1/2-2\delta}$,
so $S_1$ counts 0, 1, or 2 correspondingly if $r-1$ has 0, 1, or 2
primes in $\cI$;
and $S_2$ makes the necessary correction in the case of 2 primes, or in the
case that $r-1$ is also divisible by a larger prime.

We now recall the Bombieri--Vinogradov theorem
which states
that for each  $A$ there is some number $B$ such that
\begin{equation}
\label{eq:B-V}
\sum_{m\le z^{1/2}/\log^Bz}\max_{\gcd(a,m)=1}
\left|\pi(z;m,a)-\frac1{\varphi(m)}\li(z)\right|~\ll~\frac{z}{\log^Az},
\end{equation}
see~\cite[Chapter~II.8, Theorem~11]{Ten}.

Using~\eqref{eq:B-V} and $p$ fixed, we have
by the Mertens formula
\begin{equation}
\label{eq:S1 asymp}
S_1~\sim~\frac{\log((1-2\delta)/(1-4\delta))}{\varphi(4p)}\pi(z)~\hbox{
as }~ z\to\infty.
\end{equation}
We reorganize $S_2$ by letting $(r-1)/st=a$, so that
$$
S_2~=~\sum_{a<z^{4\delta}}\,\sum_{s\in\cI}\,
\sum_{\substack{s<t<z/as\\ ast+1\,\equiv\, u\kern-5pt\pmod{4p}\\
     ast+1~{\rm prime}}}1.
$$
Note that since $z/as\ge z^{1/2-3\delta}$, we have by Brun's method
(see~\cite[Theorem~2.3]{HR}) that the double sum on $s$ and $t$ is
\begin{eqnarray*}
\sum_{s\in\cI}\,
\sum_{\substack{s<t<z/as\\ ast+1\,\equiv\, u\kern-5pt\pmod{4p}\\
     ast+1~{\rm prime}}}1
&\ll&\sum_{s\in\cI} \frac{z}{\varphi(4pas)\log^2(z/as)}\\
&\ll&\frac{\log((1-2\delta)/(1-4\delta))}{\varphi(4pa)}\frac{z}{\log^2z}.
\end{eqnarray*}
Thus,
$$
S_2~\ll~ \sum_{a<
z^{4\delta}}\frac{\log((1-2\delta)/(1-4\delta))}{\varphi(4pa)}
\frac{z}{\log^2z}~\ll
~\delta\frac{\log((1-2\delta)/(1-4\delta))}{\varphi(4p)}\pi(z),
$$
where we use the estimate
\begin{eqnarray*}
\sum_{a< Z}\frac{1}{\varphi(a)}& =&
\sum_{a< Z} \frac{1}{a} \sum_{d\mid a}\frac{\mu^2(d)}{\varphi(d)}
~\le~ \sum_{d< Z} \frac{1}{\varphi(d)} \sum_{b < Z/d}\frac{1}{db}\\
&\ll&\log Z\sum_d\frac1{\varphi(d)d} ~\ll~\log Z.
\end{eqnarray*}

Thus, there is an absolute choice for $\delta>0$ such that for all large
$z$ depending
on the choice of $p$, we have $S_2\le S_1/4$.  We now fix such a
value of $\delta$.
Note that the identity $\#\cR(z)=S_1-S_2$
and the asymptotic formula~\eqref{eq:S1 asymp}
applied to $z/2$ show that
$\#\cR(z/2)\le(1/2+o(1))S_1$.  We
conclude that for $z$ sufficiently large, depending on the choice of $p$, that
\begin{equation}
\label{eq:Rbound}
\#(\cR\cap[z/2,z])~\ge~\frac{\log((1-2\delta)/(1-4\delta))}{5\varphi(4p)}\pi(z).
\end{equation}

Let $x$ be large, and let
$$
       y ~=~  \frac{\log x}{\log_2 x}  \mand z ~=~ y^{2/(1-2\delta)} .
$$
Let $M_y$ denote the least common multiple of the integers in $[1,y]$ and
let
$$\cQ=\{r\in\cR\cap[z/2,z]~:~r-1\mid M_y\}.
$$
We note that for $r\in\cQ$, we have $P(r-1) \le y=z^{1/2-\delta}$.
The number of primes $r \le z$
such that $\ell^k| r-1$ for some prime power $\ell^k>y$ with $k\ge 2$
is bounded by
$$
\sum_{2\le k\le \log z/\log2}~~  \sum_{\ell\,:\,\ell^k \ge y}
\frac{z}{\ell^k} ~\ll
~z \sum_{2 \le k \le \log z/\log 2} \frac{1}{k y^{1-1/k}} ~\ll~ \frac{z \log
z}{y^{1/2}}.
$$
Combining this with~\eqref{eq:Rbound} we have
\begin{equation}
\label{eq:R'bound}
\#\cQ~\ge~\kappa \frac{z}{\log z}
\end{equation}
for $z$ sufficiently large depending on the choice of $p$,
where
$$
     \kappa~=~\frac{\log((1-2\delta)/(1-4\delta))}{6\varphi(4p)}.
$$

We now put
$$
m~ =~ \fl{\frac{\log x}{\log z}}
$$
and consider the set $\cS$ of all products of $m$ distinct primes from $\cQ$.
Clearly
\begin{equation}
\label{eq:range of d}
x~ \ge~ d~ \ge~ (z/2)^m~ =~ x^{1 + o(1)}
\end{equation}
for every $d \in \cS$.
     Recalling~\eqref{eq:R'bound}, we also have
\begin{eqnarray*}
\#\cS&=&\binom{\# \cQ}{m} ~\ge~ \(\frac{\# \cQ}{m}\)^m~
\ge~  \(\frac{ \kappa z}{\log x}\)^m~\ge~\frac1z \(\frac{\kappa z}{\log
x}\)^{\log x/\log z}\\
&=&x\exp\(-\frac{\log x}{\log z}(\log_2x+O(1))\)\\
&=&x\exp\big(-(1/2-\delta)\log x+O(\log
x\log_3x/\log_2x)\big)~=~x^{1/2+\delta+o(1)}.
\end{eqnarray*}
Note that for every $d \in \cS$ we have
$$
\ell_q(d) \mid \lambda(d) \mid  M_y.
$$
Thus, from the prime number theorem, we obtain that
$$
\ell_q(d)~\le~\exp((1+ o(1)) y) = x^{o(1)}.
$$
By the construction of $\cS$ and Lemma~\ref{lem:structure2} we have
$d\in\cUp$ so
that~\eqref{eq:R and I} can be applied to compute $R_q(d)$.
Therefore,~\eqref{eq:range of d} and a standard estimate for $\varphi(d)$ imply
that
$$
R_q(d)~\ge~I_q(d)-4~\ge~\frac{\varphi(d)}{\ell_q(d)}-4~=~\frac{d^{1+o(1)}}{x^{o(1)}}
~=~x^{1+o(1)}.
$$
Thus, using our estimate for $\#\cS$, we have
$$
\sum_{d\le x}R_q(d)~\ge~ x^{1+o(1)}\#\cS~\ge~ x^{3/2+\delta+o(1)}
$$
which concludes the proof.
\medskip

\begin{rem}
{\rm A key step in the proof is the use of the Bombieri--Vinogradov
theorem~\eqref{eq:B-V}.  We have
applied this result in the proof to moduli $4ps$ with
$s\in\cI$. The Elliott--Halberstam conjecture looks superficially the same, but
the range for $m$ is allowed to be much larger:  For every $\eps>0,A>0$,
$$
\sum_{m\le z^{1-\eps}}\max_{\gcd(a,m)=1}
\left|\pi(z;m,a)-\frac1{\varphi(m)}\li(z)\right|\ll\frac{z}{\log^Az}.
$$
Assuming this conjecture, the above proof gives Theorem~\ref{thm:Average}
for every value of $\alpha<1$.  The idea is similar to the proof of Theorem~3
in~\cite{AGP} and is also mentioned in~\cite{Granv}.  Let $k$ be an arbitrarily
large integer, let $\cI_k=[z^{1/k-1/k^2},z^{1/k}]$,
and let $\cR$ be the set of primes $r\equiv u\kern-3pt\pmod{4p}$ with
$r-1$ divisible by $k-1$ primes from $\cI_k$.  The primes $r\le z$
constructed in
this way have $P(r-1)\le z^\eta$, where $\eta=1-(k-1)^2/k^2$.
Further, by the Elliott--Halberstam
conjecture, there are at least $c_{k,p}\pi(z)$ such primes $r$,
where $c_{k,p}>0$ depends only on $k$ and $p$.  Let $y=\log x/\log_2x$ as
before and let $z=y^{1/\eta}$.  We do not have
to worry about taking only those values of $r$ that are $\ge z/2$, since
each $r$ is already guaranteed to be at least $z^{1-\eta}$, so that the values
of $d$ formed at the end of the proof are $\ge x^{1-\eta+o(1)}$.
Each of these values of $d$ has $l_q(d)\le x^{o(1)}$ as before, so that
$R_q(d)\ge x^{1-\eta+o(1)}$.  Moreover, as before,
there are $x^{1+o(1)}/\exp(\log x\log_2x/\log z)=x^{1-\eta+o(1)}$
values of $d$, so
that the average in Theorem~\ref{thm:Average} is at least $x^{1-2\eta+o(1)}$.
Since $k$ is arbitrary, this then proves that the average is $x^{1+o(1)}$.}
\end{rem}

\section{Proof of Theorem~\ref{thm:Normal}}

Our proof closely follows the proof of Theorem 2 in~\cite{EPS}.
This result gives the normal order of $\lambda(n)$,
showing that for almost all $n$ (that is, on a set of asymptotic
density 1), we have
$\lambda(n)=n/(\log n)^{(1+o(1))\log_3n}$.  Since for all $n$ we have
$n\ge\varphi(n)\gg n/\log_2n$, it follows
that for almost all $n$ we have
$$
\frac{\varphi(n)}{\lambda(n)}~=~(\log n)^{(1+o(1))\log_3n}
$$
as $n\to\infty$.

We first note the elementary fact that
\begin{equation}
\label{eq:div}
m\mid
n~\implies~\frac{\varphi(m)}{\lambda(m)}~\bigg|~\frac{\varphi(n)}{\lambda(n)}.
\end{equation}
Indeed, by the Chinese remainder theorem, there is an integer $a$ such
that for each prime power $l^\nu\mid n$ we have $\ell_a(l^\nu)=\lambda(l^\nu)$.
Then $\ell_a(n)=\lambda(n)$ and $\ell_a(m)=\lambda(m)$.  The canonical
epimorphism from $(\Z/n\Z)^\times$ to $(\Z/m\Z)^\times$ induces an
epimorphism from $(\Z/n\Z)^\times/\langle a\rangle$ to
$(\Z/m\Z)^\times/\langle a\rangle$, so that~\eqref{eq:div} follows.

Let $x$ be large and let $y=\log_2x$.
In view of~\eqref{eq:ranklowerbound}, it suffices to show that
\begin{equation}
\label{eq:logbound}
\log\varphi(d_p)-\log\lambda(d_p)~=~\frac13 y\log y+O_p(y\log_2y)
\end{equation}
for all $d\le x$ with at most $o_p(x)$ exceptions, where $d_p$ is given
by~\eqref{eq:Def d_p}.
(In fact~\eqref{eq:logbound} is somewhat stronger than required in that
we really only need a lower bound for the left side.  Nevertheless it
is interesting to know the true order of $\varphi(d_p)/\lambda(d_p)$
for almost all integers $d$.)
For all $d$ we have
$$
\log\varphi(d_p)~=~\sum_l v_l(\varphi(d_p))\log l,\quad
\log\lambda(d_p)~=~\sum_l v_l(\lambda(d_p))\log l,
$$
where the sums are over all primes $l$.
It follows from~(6) and~(19) in~\cite{EPS} that
$$
\sum_{l\le y\log y}v_l(\lambda(d_p))\log l
~\le~\sum_{l\le y\log y}v_l(\lambda(d))\log l
~=~ y\log_2 y+O(y)
$$
for all but $o(x)$ values of $d\le x$.
Using~\eqref{eq:div}, we have for each prime $l$,
$$
v_l(\varphi(d_p))-v_l(\lambda(d_p))
~\le~v_l(\varphi(d))-v_l(\lambda(d)).
$$
Also, from (20), (21), and (22) in~\cite{EPS} we have
$$
\sum_{l> y\log y}\(v_l(\varphi(d))-v_l(\lambda(d))\)\log l
~\le~\frac{y\log_2y}{\log y}+(\log y)^2
$$
for all but $o(x)$ values of $d\le x$.
It thus follows that
$$
\sum_{l> y\log y}\(v_l(\varphi(d_p))-v_l(\lambda(d_p))\)\log l
~\le~\frac{y\log_2y}{\log y}+(\log y)^2
$$
for all but $o(x)$ values of $d\le x$.
Thus, to prove that~\eqref{eq:logbound} holds for all but $o_p(x)$
values of $d\le x$, it suffices to show that
\begin{equation}
\label{eq:logphibound}
\sum_{l\le y\log y}v_l(\varphi(d_p))\log l~=~\frac13 y\log y+O_p(y\log_2y)
\end{equation}
holds for all but $o_p(x)$ values of $d\le x$.

  We  prove~\eqref{eq:logphibound} using the
Tur\'an--Kubilius inequality, arguing along the same
lines as in~\cite{EPS}.
We recall, that for real-valued additive
functions $g(n)$ the Tur\'an--Kubilius inequality asserts that if
$$
E(g,x)~=~\sum_{r^\nu\le x}\frac{g(r^\nu)}{r^\nu}\(1-\frac1r\) \mand
V(g,x)~=~\sum_{r^\nu\le x}\frac{g(r^\nu)^2}{r^\nu},
$$
then
\begin{equation}
\label{eq:TK}
\sum_{n\le x}\(g(n)-E(g,x)\)^2~\le~10xV(g,x),
\end{equation}
see~\cite[Chapter~III.3, Theorem~1]{Ten}.  Let
$$
h(n)~=~\sum_{l\le y\log y}v_l(\varphi(n))\log l,\quad
h_p(n)~=~h(n_p)~=~\sum_{l\le y\log y}v_l(\varphi(n_p))\log l,
$$
so that $h$ and $h_p$ are both additive functions.
It is shown in~\cite[pp.~366--367]{EPS}  that
$$
V(h,x)~\ll~y(\log y)^2.
$$
Since $V(h_p,x)\le V(h,x)$, we have $V(h_p,x)\ll y(\log y)^2$.

For the determination of $E(h_p,x)$ we use
Proposition~\ref{prop:Qpkdistribution}.
Since $h_p(r^\nu)\le \log(r^\nu)$, we have
$$
     E(h_p,x)~=~\sum_{r^\nu\le x}\frac{h_p(r^\nu)}{r^\nu}\(1-\frac1r\)
~=~\sum_{r\le x}\frac{h_p(r)}{r}+O(1).
$$
Now
$$
\sum_{r\le x}\frac{h_p(r)}{r}
~=~\sum_{l\le y\log y}\sum_{\substack{r\le x\\
r\in\cR_p}}\frac{v_l(r-1)\log l}{r}
~=~\sum_{l\le y\log y}\log l\sum_{i\ge1}
\sum_{\substack{r\le x\\ r\in\cR_p\\ v_l(r-1)=i}}\frac{i}{r}.
$$
The inner sum is $O\(iy/l^i\)$, so the contribution for values of $i>1$
is $O(y)$.  We conclude that
\begin{equation}
\label{eq:Eest}
     E(h_p,x)~=~\sum_{l\le y\log y}\log l\sum_{\substack{r\le x\\
r\in\cR_p\\ r\equiv
1\kern-5pt\pmod l}}
\frac1r +O(y).
\end{equation}
Recall the notation $\cQ_{p,m}$ from~\eqref{eq:Qpmdef}.
We use partial summation on the inner sum in~\eqref{eq:Eest} getting
$$
\sum_{r\in\cQ_{p,l}(x)} \frac1r
~=~\frac{\#\cQ_{p,l}(x)}{x}+\int_2^x\frac{\#\cQ_{p,l}(z)}{z^2}\,dz.
$$
We use the estimate $\#\cQ_{p,l}(z)\le\pi(z;l,1)\ll \pi(z)/l$
for $z\le\exp(l^7)$, and we use Proposition~\ref{prop:Qpkdistribution} for
larger values of $z$, getting that
$$
\sum_{r\in\cQ_{p,l}(x)} \frac1r
~=~\frac{y}{3(l-1)}+O\(\frac{\log l}{l}\).
$$
Putting this into~\eqref{eq:Eest} we get that
$$
     E(h_p,x)~=~\sum_{l\le y\log y}\frac{y\log l}{3(l-1)}+O(y)
~=~\frac13 y\log(y\log y)+O(y).
$$

We now use this estimate for $ E(h_p,x)$ and our earlier estimate for
$V$ in the
Tur\'an-Kubilius inequality~\eqref{eq:TK} applied to the function $h_p$.
We get that the number of $d\le x$ with
$$
\left|h_p(d)-\frac13y\log y\right|~>~y\log_2y
$$
is $O\( xy(\log y)^2/(y\log_2y)^2\) = o(x)$.  This concludes the proof
of~\eqref{eq:logphibound} and so proves the theorem.

\end{document}